\newtheorem{theorem}{Theorem}[section]
\newtheorem{lemma}[theorem]{Lemma}
\newtheorem{prop}[theorem]{Proposition}
\newtheorem{cor}[theorem]{Corollary}
\theoremstyle{definition}
\theoremstyle{remark}
\newtheorem{remark}[theorem]{Remark}
\newcommand{\cL}{{\mathcal{L}}}
\newcommand{\dd}{\mathrm{d}}
\newcommand{\Chi}{\raise .3ex
\hbox{\large $\chi$}} 
\newcommand{\T}{{\mathbb{T}}}
\newcommand{\R}{\mathbb{R}}
\newcommand{\N}{\mathbb{N}}
\newcommand{\Z}{\mathbb{Z}}
\newcommand{\C}{\mathbb{C}}
\newcommand{\zz}{\mathbb{Z}}
\newcommand{\re}{\mathbb{R}}
\newcommand{\tor}{\mathbb{T}}
\newcommand\mix{\mathop{\rm mix}}
\algrenewcommand\algorithmicrequire{\makebox[46pt][l]{\textrm{required:}}}
\algrenewcommand\algorithmicensure{\makebox[46pt][l]{\textrm{output:}}}
\algrenewcommand\algorithmicfunction{\textrm{function}}
\algrenewcommand\algorithmicwhile{\textrm{while}}
\algrenewcommand\algorithmicdo{}
\algrenewcommand\algorithmicend{\textrm{end}}
\algrenewcommand\algorithmicforall{\textrm{for all}}
\algrenewcommand\algorithmicfor{\textrm{for}}
\algrenewcommand\algorithmicrepeat{\textrm{repeat}}
\algrenewcommand\algorithmicuntil{\textrm{until}}
\algrenewcommand\algorithmicif{\textrm{if}}
\algrenewcommand\algorithmicthen{\textrm{then}}
\algrenewcommand\algorithmicelse{\textrm{else}}
\newcommand{\bk}{{\boldsymbol{k}}}
\newcommand{\bl}{{\boldsymbol{l}}}
\newcommand{\bx}{{\boldsymbol{x}}}
\newcommand{\by}{{\boldsymbol{y}}}
\newcommand{\balpha}{{\boldsymbol{\alpha}}}
\newcommand{\be}{\begin{equation}}
\newcommand{\ee}{\end{equation}}
\newcommand{\beq}{\begin{eqnarray}}
\newcommand{\beqq}{\begin{eqnarray*}}
\newcommand{\eeq}{\end{eqnarray}}
\newcommand{\eeqq}{\end{eqnarray*}}
\numberwithin{equation}{section}
\title{{$s$-Numbers of Embeddings of Weighted Wiener Algebras}}
\author[a]{Van Dung Nguyen}
\author[a]{Van Kien Nguyen	\footnote{Corresponding author}}
\author[b]{Winfried Sickel}
\affil[a]{Department of Mathematical Analysis, University of Transport and Communications
	\protect\\	No.3 Cau Giay Street, Lang Thuong Ward, Dong Da District,
	Hanoi, Vietnam
	\protect\\
	Email: dungnv@utc.edu.vn,\ kiennv@utc.edu.vn
}
\affil[b]{Friedrich-Schiller-Universit\"at Jena, 
Ernst-Abbe-Platz 2, 07737 Jena, Germany
	\protect\\ 
	Email: winfried.sickel@uni-jena.de
}
\date{\today}
\begin{document}
 
\maketitle
\begin{abstract}  
In this paper we study
the asymptotic behavior of Kolmogorov, approximation, Bernstein and Weyl numbers of embeddings $ \mathcal{A}^{s,r}_{\mix}(\T^d) \to L_2(\T^d)$ 
 and $\mathcal{A}^{s,r}_{\mix}(\T^d) \to \mathcal{A}(\T^d)$, where $\mathcal{A}^{s,r}_{\mix}(\T^d)$ is a weighted Wiener algebra of mixed smoothness $s$ and  
 $\mathcal{A}(\T^d)$ is the Wiener algebra itself, both defined on  the $d$-dimensional torus $\T^d$.
Our main interest consists in the calculation of the associated asymptotic constants. 

\medskip
\noindent
{\bf Keywords and Phrases:} Mixed smoothness, weighted Wiener algebra, compact embedding, $s$-number, asymptotic constant, dimensional dependence

\medskip
\noindent
{\bf Mathematics Subject Classification 2020:} 41A25, 41A44, 41A60, 42A10, 47B06
\end{abstract}


\section{Introduction}


Let $\omega=(\omega(\bk))_{\bk\in \Z^d}$  be a sequence of positive numbers.
In this paper we are concerned with the  asymptotic behavior of certain  $s$-numbers with respect to the embedding of a weighted Wiener algebra
$\mathcal{A}_{\omega}(\T^d)$ on the $d$-dimensional torus $\T^d$ into either the 
Wiener algebra $\mathcal{A}(\T^d)$ itself or  $L_p (\T^d)$, $2\le p \le \infty$.
Most interesting for us will be the case of dominating mixed smoothness when the weight is given by 
 $ \omega_{s,r}=(\omega_{s,r}(\bk))_{\bk \in \Z^d}$, where $s>0$ and $\omega_{s,r}(\bk)= \prod_{i=1}^d\big(1+|k_i|^r\big)^{s/r}$ if $0<r<\infty$,  $\omega_{s,\infty}(\bk) =  \prod_{i=1}^d \max (1,|k_i|)^{s}$. In this context the spaces $\mathcal{A}^{s,r}_{\mix}(\T^d):=\mathcal{A}_{\omega_{s,r}} (\T^d)$ are counterparts of the well-known 
Sobolev spaces $H^{s,r}_{\mix} (\T^d)$ of dominating mixed smoothness. In what follows we shall call $\mathcal{A}^{s,r}_{\mix}(\T^d)$ the weighted Wiener algebra of mixed smoothness $s$.  The parameter 
$r\in (0,\infty]$ does not influence the set of functions, it refers to a family of  equivalent norms.  
Our motivation of studying the asymptotic behavior of $s$-numbers of embeddings of $\mathcal{A}^{s,r}_{\mix}(\T^d)$ comes from recent papers on high-dimensional approximation \cite{BKUV17,CKS16,KPV15,KV19}. The authors in \cite{BKUV17,CKS16} studied approximation numbers or sampling widths of 
embeddings of Sobolev spaces with mixed smoothness in the norm of $\mathcal{A}(\T^d)$,  whereas \cite{KPV15,KV19} dealt  with 
sampling widths at rank-1 lattice nodes of embeddings from $\mathcal{A}^{s,r}_{\mix}(\T^d)$ into $L_\infty(\T^d)$ or into isotropic Sobolev spaces. In all the mentioned papers, only \cite{CKS16} gave the correct asymptotic order of approximation numbers of the embedding $H^{s,r}_{\mix} (\T^d)\hookrightarrow \mathcal{A}(\T^d)$, while the other either gave an upper or a lower bound for sampling widths.

In the present paper, in  specific situations we shall not only investigate the optimal order of the decay of the $s$-numbers (as classically done) but we shall determine 
the asymptotic constant as well. This sheds some light not only on the dependence on $n$, but also on the dependence on $s,r$ 
and in particular on $d$. Our main results are the following. 
For  $0<s<\infty$, $0<r\leq \infty$  and $d\in \N$ we have 

\begin{equation}\label{eq-intro-01}
\begin{aligned}
 \lim_{n \to \infty} \, \frac{s_{n} \big(id: \, \mathcal{A}^{s,r}_{\mix} (\T^d) \to \mathcal{A} (\T^d)\big)}{n^{-s}(\ln n)^{s(d-1)} \, } 
&= \bigg(\frac{2^{d}}{(d-1)!}\bigg)^s \, , 
\\
&
\\
\lim\limits_{n\to \infty} \frac{a_n\big(id: \mathcal{A}_{\mix}^{s,r}(\T^d) \to L_2(\T^d)\big)}{n^{-s}(\ln n)^{s(d-1)}}
& = 
\lim\limits_{n\to \infty} \frac{d_n\big(id: \mathcal{A}_{\mix}^{s,r}(\T^d) \to L_2(\T^d)\big)}{n^{-s}(\ln n)^{s(d-1)}}
\\
& =
\bigg(\frac{2s}{2s+1}\bigg)^s\bigg( \frac{2^d}{(d-1)!}\bigg)^s\, ,
\end{aligned}
\end{equation}
and
\begin{equation}\label{eq-intro-01.5}
\begin{aligned}
\lim\limits_{n\to \infty} \frac{b_n\big(id: \mathcal{A}_{\mix}^{s,r}(\T^d) \to L_2(\T^d)\big)}{n^{-s-\frac{1}{2}}(\ln n)^{s(d-1)}} 
&= 
\lim\limits_{n\to \infty} \frac{ x_n\big(id: \mathcal{A}_{\mix}^{s,r}(\T^d) \to L_2(\T^d)\big)}{n^{-s-\frac{1}{2}}(\ln n)^{s(d-1)}}
\\
& = 
\sqrt{ 2s+1}\bigg( \frac{2^d}{(d-1)!}\bigg)^s \,.
\end{aligned}
\end{equation}
Here $s_n \in \{a_n, b_n, d_n, x_n\}$.
By $a_n$ we denote the approximation numbers (linear widths), by $b_n$
the Bernstein numbers, $d_n$ refers to the Kolmogorov numbers and  $x_n$ to the Weyl numbers.
From these relations it  immediately follows that in the first two  cases 
the optimal order of the decay (asymptotic behavior)
is given by  $n^{-s}\, (\ln n)^{s(d-1)}$, $n \in \N$, whereas in the third 
case we have $n^{-s-1/2}\, (\ln n)^{s(d-1)}$, $n \in \N$.
In other words, for approximation numbers there is not much difference in the approximation with respect to  the norm 
of the Wiener algebra $\mathcal{A}(\T^d)$ or with respect to the norm in $L_2 (\T^d)$, but for Bernstein and Weyl numbers it is. 
The value of these limits above we shall call asymptotic constant.
In case when the target space of the embedding is $L_p (\T^d)$, $2<p\le\infty$, we are not able to determine the asymptotic constant, we do not even know whether 
the limit exists. 
However, we will prove two-sided estimates for the approximation numbers (and sometimes also other $s$-numbers) with constants independent of $n$
and $d$. For $1\le p < 2$ we do not know  so much, we refer to \cite{CKS19} for some results, mainly based on duality.

The original motivation to consider the $d$-dependence of the behavior of approximation numbers 
comes from certain needs of numerical analysis on high-dimensional 
approximation which has been the object of an intensive study recently. We refer to  
Bungartz, Griebel \cite[Theorem\ 3.8]{BuGr04} as well as to 
Schwab, S\"uli, and  Todor \cite{SST08}. In both papers the  non-periodic situation is considered.
In the periodic situation, let us refer to 
Dinh D{\~u}ng, Ullrich \cite{DU13}, Chernov, Dinh {D}\~ung \cite{ChD16}, Krieg \cite{Kr18}, K\"uhn, Mayer, Ullrich \cite{KMU16}
K\"uhn, Sickel, Ullrich \cite{KSU14,KSU15,KSU20} and Cobos, K\"uhn, Sickel \cite{CKS16,CKS19}.
In all these quoted papers  the asymptotic behavior as well as the preasymptotic behavior of 
the approximation numbers  of the embedding of a weighted Hilbert space $F_\omega (\T^d)$ either into $L_2 (\T^d)$ or into $L_\infty (\T^d)$
has been investigated.

Let us give a short overview of what is known  about the $d$-dependence  of  certain $s$-numbers at this moment.
For simplicity we focus our attention on approximation numbers. This situation is illustrated in Figure \ref{fig1}. Each arrow in this  figure is  interpreted as 
the existence of a  sharp two-sided estimate for approximation numbers of a corresponding embedding with constants independent of $n$ and $d$.
The parameter $p_0$ in Figure \ref{fig1} refers to the target space $L_{p_0} (\T^d)$ except $p_0=\infty$. In the case $s_0=0$ and $p_0=\infty$ the point $(s_0,1/p_0)= (0,0)$
has to be identified with either $L_\infty (\T^d)$ or  
$\mathcal{A} (\T^d)$.


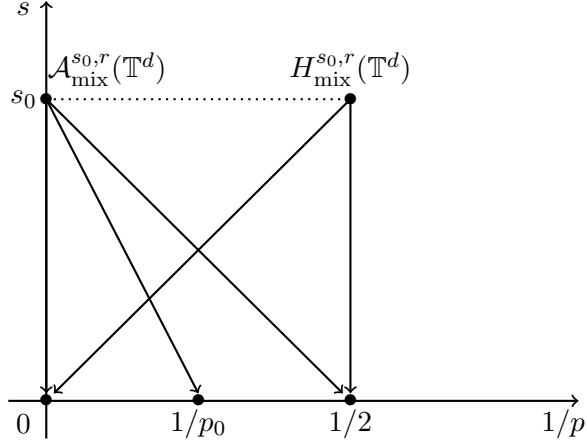
\begin{figure} 
	\begin{center}
\begin{tikzpicture}[thick]
\draw[->] (-1.5,0) -- (6,0) ;
\draw[->] (-1,-0.5) -- (-1,5.3) ;

\draw[->] (-1,4) -- (-1,0.1) ;

\draw[->] (-1,4) -- (2.9,0.1) ;
\draw[->] (3,4) -- (3,0.1) ;

\draw[->] (3,4) -- (-0.9,0.1) ;

\draw[->] (-1,4) -- (1,0.1) ;

\draw[dotted] (-1,4) -- (3,4) ;

\node at (3, 4.4) {$H^{s_0,r}_{\mix} (\T^d)$} ;
\node at (-0.2, 4.4) {$\mathcal{A}^{s_0,r}_{\mix} (\T^d)$} ;

\node at (-1.3,5.2) {$s$} ;

\node at (-1,4) {$\bullet$} ;
\node at (3,0) {$\bullet$} ;
\node at (-1,0) {$\bullet$} ;
\node at (1,0) {$\bullet$} ;
\node at (3,4) {$\bullet$} ;
\node at (-1.3,4) {$s_0$} ;
\node at (3,-0.3) {$1/2$} ;
\node at (-1.3,-0.3) {$0$} ;
\node at (5.8,-0.3) {$1/p$} ;
\node at (1,-0.3) {$1/p_0$} ;
\end{tikzpicture}
\caption{\small Illustration of embeddings with known  sharp two-sided estimates for the approximation numbers}
\label{fig1}
\end{center}
\end{figure}
It would be desirable to  replace 
$\mathcal{A}^{s_0,r}_{\mix} (\T^d)$ by  the classical (mixed-type) H\"older-Zygmund space ${C}^{s_0}_{\mix} (\T^d) $, 
the Nikol'skij-Besov  $S^{s_0}_{\infty,\infty} B(\T^d)$ or the Sobolev space $S^{s_0}_{\infty}W(\T^d)$ of dominating mixed smoothness.
However, with this respect we do not have a complete  answer. In the present paper, for Bernstein and Weyl numbers and $m\in \N$ we prove that
\begin{align*}
\lim\limits_{n\to \infty} \frac{ b_n\big(id: \mathring{S}^m_{\infty}W(\T^d) \to L_2(\T^d)\big)}{n^{-m-\frac{1}{2}}(\ln n)^{m(d-1)}} 
& = 
\lim\limits_{n\to \infty} \frac{ x_n\big(id: \mathring{S}^m_{\infty}W(\T^d) \to L_2(\T^d)\big)}{n^{-m-\frac{1}{2}}(\ln n)^{m(d-1)}}
\\
& = 
\sqrt{ 2m+1}\bigg( \frac{2^d}{(d-1)!}\bigg)^m \, ,
\end{align*}
where $\mathring{S}^m_{\infty}W(\T^d)$ is the subspace of the  Sobolev space of dominating mixed smoothness defined by the extra conditions 
$\int_{-\pi}^\pi f(\bx)\, \dd x_j =0$, $j=1,\ldots,d$.

Finally we shall deal with the behavior of $s$-numbers with respect to the embedding 
$\mathcal{A}^{s,2}_{\mix}(\T^d)\hookrightarrow  H^1(\T^d)$, where $H^1(\T^d)$ is the isotropic Sobolev space of smoothness $1$. 
As a preparation for this we shall supplement the knowledge about the asymptotic behavior of $s$-numbers 
of the embedding   $H^{s,2}_{\mix}(\T^d)\hookrightarrow H^1(\T^d)$,
see, e.g.,  \cite{BuGr04}, \cite{DU13} and \cite{KSU20}, by determining the asymptotic constant. We then show that
\begin{equation}\label{eq-intro-02}
\begin{aligned}
	\lim\limits_{n\to \infty} \frac{a_n\big(id: \mathcal{A}_{\mix}^{s,2}(\T^d) \to H^1(\T^d)\big)}{n^{-s+1}}
	& =  
	\lim\limits_{n\to \infty} \frac{d_n\big(id: \mathcal{A}_{\mix}^{s,2}(\T^d) \to H^1(\T^d)\big)}{n^{-s+1}}
	\\
	& = 
	\bigg(\frac{2s}{2s+1}\bigg)^s(2d)^{s-1}(2S+1)^{(s-1)(d-1)}\, ,
\end{aligned}
\end{equation}
and
\begin{equation}\label{eq-intro-02.5}
\begin{aligned}
\lim\limits_{n\to \infty} \frac{b_n\big(id: \mathcal{A}_{\mix}^{s,2}(\T^d) \to H^1(\T^d)\big)}{n^{-s+ 1/2}} 
&=
\lim\limits_{n\to \infty} \frac{x_n\big(id: \mathcal{A}_{\mix}^{s,2}(\T^d) \to H^1(\T^d)\big)}{n^{-s+ 1/2}}
\\
& = 
\sqrt{ 2s+1}\,(2d)^{s-1}(2S+1)^{(s-1)(d-1)} \,,
\end{aligned}
\end{equation}
where $d\in \N$, $s >1$ and  
\[	
S:=\sum_{k=1}^{\infty}\dfrac{1}{(k^2+1)^{\frac{s}{2(s-1)}}}\, .
\]
These asymptotic formulas are of particular importance with respect to high-dimensional numerical solutions of the Poisson equation, we refer to \cite{BuGr04}. Observe, that there is no logarithmic factor in the asymptotic behavior of these $s$-numbers.
Examining the asymptotic constants in \eqref{eq-intro-01}, \eqref{eq-intro-01.5}
and \eqref{eq-intro-02}, \eqref{eq-intro-02.5}  it becomes clear that there is no simple lifting argument  behind. 
Indeed, this cannot be expected because $H_{\mix}^{1}(\T^d)$ belongs 
to a different scale of spaces than  $H^1(\T^d)$. 
The latter space is not a tensor product space but $H_{\mix}^{1}(\T^d)$ is. So in the 
embedding $\mathcal{A}_{\mix}^{s,r}(\T^d)\hookrightarrow H^1(\T^d)$ we have a break in the scale.
 
Our method consists in a reduction of our original problem  to the 
investigation of $s$-numbers of  diagonal operators between sequence spaces. First we show that the $s$-number under consideration is equal to the corresponding $s$-number of a certain diagonal operator between sequence spaces. The next important step is the rearrangement of the weight (of the diagonal operator) into a non-increasing sequence and the study of its asymptotic behavior. 
Usually this involves some volume estimates and this is the main difficulty here.
Finally, these results are carried over to the function spaces 
and  the desired limits are obtained. Note that this method has been employed in several earlier papers, see e.g., K\"uhn, Mayer, Ullrich \cite{KMU16}
K\"uhn, Sickel, Ullrich \cite{KSU14,KSU15,KSU20} and Cobos, K\"uhn, Sickel \cite{CKS16,CKS19}.
 
It would be also of interest to study the isotropic situation.
In this case 	
the related spaces $\mathcal{A}^{s,r}(\T^d)$ are counterparts of the classical periodic 
Sobolev spaces $H^{s,r} (\T^d)$ on the $d$-dimensional torus. 
The behavior of the $s$-numbers $(s_n)_{n\in \N}$  in dependence on $n$ for the embedding $H^{s,r} (\T^d) \hookrightarrow L_2 (\T^d)$ 
has been investigated at various places, but for investigations in dependence of $n$ and $d$ we refer to \cite{KSU14}.


The paper is organized as follows. 
In Section \ref{vorb} we collect some needed material about the functions spaces and the $s$-numbers under consideration. 
The next Section \ref{weite1} is devoted to the study of 
$s$-numbers of  $id:\, \mathcal{A}_\omega (\T^d) \to X (\T^d)$ for general weights $\omega$ and $ X\in \{\mathcal{A}, L_2,L_\infty\}$.
These results will be used in Section \ref{ex1}, where we deal with the particular family  of weights $\omega_{s,r}$ 
associated to the dominating mixed smoothness.
In the final Section \ref{ex3} we shall determine the asymptotic constant for $s$-numbers of the embedding 
$\mathcal{A}^{s,2}_{\mix}(\T^d) \hookrightarrow H^1(\T^d)$.

\noindent
{\bf Notation.} As usual, $\N$ denotes the natural numbers, $\N_0$ the non-negative integers,
$\zz$ the integers,
$\re$ the real numbers, and $\C$ the complex numbers. By $\tor$ we denote the torus, represented by the interval $[0,2\pi]$, where
the end points of the interval are identified.
The letter $d$ is always reserved for the dimension in $\N^d$, $\Z^d$, $\re^d$, $\C^d$, and $\tor^d$. Vectorial quantities are denoted by boldface
letters and  $x_i$ denotes the $i$th coordinate 
of $\bx \in \R^d$, i.e., $\bx := (x_1,\ldots, x_d)$.
For $0<p\leq \infty$ and $\bx\in \re^d$ we denote $|\bx|_p = \big(\sum_{i=1}^d |x_i|^p\big)^{1/p}$ with the usual modification for $p=\infty$. 
In case $p=2$ we shall  use simply $|\bx|:= |\bx|_2$.
If $X$ and $Y$ are two Banach spaces, $\cL(X,Y)$ denotes the set of  continuous linear operators from $X$ to $Y$. The norm
of an element $x$ in $X$ is denoted by $\|x|X\|$ and the norm of an operator
$A\in \cL(X,Y)$ by $\|A:X\to Y\|$ or $\|A\|$ when there is no 
ambiguity. The symbol $X \hookrightarrow Y$ indicates that there is a
continuous embedding from $X$ into $Y$. 
By $id$ we denote always identity operators.
The symbol $|\Omega|$ stands for the cardinality of the finite set $\Omega$.
If $(a_n)_{n\in \N}$ and $(b_n)_{n\in \N}$ are two sequences of real numbers, the symbol $a_n\sim b_n$ indicates that $\lim_{n\to \infty}\frac{a_n}{b_n}=1$.


\section{$s$-Numbers and function spaces}
\label{vorb}


First we recall  definitions of various $s$-numbers of linear continuous operators and their properties which will play a role in our investigations.


\subsection{$s$-Numbers}


Let $X$, $Y$ be Banach spaces and $T$ be a continuous linear operator from $X$ to $Y$. 
Then the $n$th approximation number of $T$ is defined as
$$ a_n(T):=\inf\big\{\|T-A\|: \ A\in \mathcal L(X,Y),\ \ \text{rank} (A)<n\big\}\, , \qquad n \in \N\, . $$
The $n$th Kolmogorov number of the linear operator $T $ is defined as 
\begin{equation*}
d_n(T)= \inf_{L_{n-1}}\sup_{\|x|X\|\leq 1}\inf_{y\in L_{n-1}}\|Tx-y|Y\|, 
\end{equation*}
where the left-most  infimum is  taken over all $(n-1)$-dimensional subspaces $L_{n-1}$ in $Y$.
\\
The $n$th Weyl number of $T$ is defined as 
	$$ x_n(T):=\sup\big\{a_n(TA):\ A\in \mathcal L(\ell_2,X),\ \|A\|\leq 1\big\}\, , \qquad n \in \N\, .$$
The $n$th Bernstein number of $T$ is defined as
	$$ b_n(T)= \sup_{L_n}\inf_{x\in L_n
			\atop 0 < \|x|X\| \leq 1}  \frac{\|Tx\,|\,Y\|}{\|x\, |\, X\|} ,$$
	where the supremum is taken over all $n$-dimensional subspaces $L_n$ in $X$.

There is a huge amount of literature about  $s$-numbers and related widths. The concept of widths came from the Soviet
School in Approximation Theory. It was used to measure the approximation error of sets in normed spaces. Kolmogorov widths are the oldest widths suggested by Kolmogorov in \cite{Kol36}. Linear and
Bernstein widths were introduced by Tikhomirov in \cite{Tikho60}.  In order to approximate general operators between Banach spaces,  Pietsch defined the notion  $s$-number in  \cite{Pie74}. Kolmogorov, linear, and Bernstein widths are Kolmogorov, approximation, and Bernstein numbers of identity operators, respectively.
There is one more point we would like to mention.
Approximation, Kolmogorov,  Bernstein and Weyl numbers are $s$-numbers in the sense of Pietsch \cite{Pie74}, \cite[Section 11.1]{Pie80B}.
However, later  in \cite[Section 2.2]{Pie87B}, \cite{Pie08} Pietsch modified the definition of $s$-numbers. Bernstein numbers do not satisfy these new conditions, but the other still do. 
All these numbers have its own history and their different roles in approximation and operator theory.
Good references are the monographs by Pietsch \cite{Pie80B,Pie87B,{Pie07B}} and of Pinkus \cite{Pin85B}.

\subsection*{Some properties}

Many times we shall employ the ideal property, i.e., if $s=(s_n)_{n \in \N}$ is one of the above sequences, then 
\be\label{ws-04}
s_n (RST)\le \|R\|\, s_n (S)\, \|T\|\, , \qquad n \in \N\, ,
\ee
holds, see \cite{Pie74}, \cite[Section 11.1]{Pie80B}.
Here $R,S,T$ are linear operators satisfying
$T \in \mathcal L(X_0,X)$, $S \in \mathcal L(X,Y)$, and
$R \in \mathcal L(Y,Y_0)$ for certain Banach spaces $X_0,X,Y_0,Y$.
If $T$ is an compact operator between two Hilbert spaces, then
\begin{equation}\label{eq-eq}
a_n(T)=d_n(T)=x_n(T)=b_n(T)
\end{equation}
and they coincide with  the singular numbers of $T$. 
In general, the approximation numbers represent the largest $s$-numbers.
For compact operators $T$ it holds
\beqq
 b_n (T) \le  d_n (T) \le a_n (T)\, , \qquad n \in \N\, ,
\eeqq
see, e.g., \cite[Theorems 8.1, 8.2]{Pie74}. 
There is no general inequality relating Weyl and Bernstein numbers, Weyl and Kolmogorov numbers, see, e.g., \cite{Pie08}.
However, if $T:~X \to Y$, where $Y$ is a Hilbert space, then there is a relation between them. 
For any $A: \ell_2\to X$ with $\|A\|\leq 1$ we have $TA:~\ell_2 \to Y$ and by \eqref{eq-eq} we obtain
\begin{equation*}
	a_n(TA)=b_n(TA)\leq \|A\| \cdot b_n( T:X\to Y)\leq b_n( T:X\to Y)\, .
\end{equation*}
It follows immediately from the definition of the Weyl numbers that  
\be\label{weylb}
x_n(T)\leq b_n(T)
\ee
in this special situation.
If $Y$ is a Hilbert space we also have 
\beqq
a_n(T)=d_n(T), 
\eeqq
see, e.g., \cite[Proposition 11.6.2]{Pie80B}.

As it will be seen below, Bernstein numbers and Weyl numbers will behave similarly in our context and in the same way 
approximation numbers and Kolmogorov numbers.  
Therefore we will use the following convention.

\noindent
{\bf Convention:} The  notation $v_n(T)$ will be used instead of  $x_n(T)$ and $b_n(T)$.
The notation  $u_n(T)$  will be used instead of  $a_n(T)$ and $d_n(T)$. 
This means if we say $v_n$ has properties $x,y,z$ then  both, 
Bernstein numbers and Weyl numbers have these properties etc..


\subsection{Function spaces}


Let $\T^d$ be the $d-$dimensional torus. We equip $\T^d$ with the probability measure $(2\pi)^{-d}\dd \bx$.  It implies that $\T^d$ has volume 1  and this is needed for the theory developed in \cite{CKS16}. 
Furthermore it has the advantage that several related embeddings will have operator norm equal to $1$. 
For a function $f\in L_1(\T^d)$, its Fourier coefficients are defined as
\begin{equation*}
	\hat{f}(\bk):=(2\pi)^{-d}\int_{\T^d} f(\bx) e^{-{\rm i} \bk \bx} \dd \bx,\qquad \bk\in\Z^d.
\end{equation*}
Note, that the system $\{e^{{\rm i}\bk \bx}:\ \bk \in \Z^d\}$ is an orthonormal basis in $L_2(\T^d)$. Hence, it holds for any $f\in L_2(\T^d)$ that
\begin{equation*}
	\|f|L_2(\T^d)\|^2= (2\pi)^{-d}\int_{\T^d}|f(\bx)|^2  \dd \bx = \sum_{\bk \in \Z^d} |\hat{f}(\bk)|^2 \,.
\end{equation*}

Let $\omega=(\omega(\bk))_{\bk\in \Z^d}$  be a sequence of positive numbers. In what follows we will call such a sequence a weight.
We introduce the weighted Wiener algebra 
$\mathcal{A}_\omega(\T^d)$ as the collection of all  functions $f\in L_1(\T^d)$ such that 
\[
\|f|\mathcal{A}_\omega(\T^d)\|:= \sum_{\bk\in \Z^d} \omega(\bk) |\hat{f}(\bk)|< \infty \, .
\]
In case $\omega(\bk)= 1$ for all $\bk\in \Z^d$ we get back the classical  Wiener algebra $\mathcal{A}(\T^d)$. 
Let us add a comment regarding the name of $\mathcal{A}_\omega(\T^d)$ to avoid misunderstanding.
We do not claim here that $\mathcal{A}_\omega(\T^d)$ is an algebra.
In general this is not true. But those questions are out of scope here.

As another scale of function spaces we introduce some weighted Hilbert spaces built on $L_2 (\T^d)$.
The class $F_\omega(\T^d)$  is the collection of all integrable functions  $f\in L_1(\T^d)$  such that 
\begin{equation*}
	\| f|F_\omega(\T^d)\| := \Bigg(\sum_{\bk\in \Z^d} \omega(\bk)^2|\hat{f}(\bk)|^2\Bigg)^{1/2}<\infty\,.
\end{equation*}
For later use we remark that
\be
\label{ws-40}
\|\, f\, |F_\omega (\T^d)\| \le \|\, f\, |\mathcal{A}_\omega (\T^d)\|\, ,  
\ee
or with other words, the norm of $id: ~ \mathcal{A}_\omega (\T^d) \to 
{F}_\omega (\T^d)$ is equal to $1$.
In what follows we suppose 
\be \label{ws-07}
\lim_{|\bk|_1 \to \infty}  |\omega(\bk)| = \infty \, .
\ee
Then an important role will be played by the non-increasing rearrangement of the sequence $(1/\omega(\bk))_{\bk\in \Z^d}$, 
denoted by  $(\sigma_n)_{n\in \N}$.
Observe that \eqref{ws-07} implies on the one side the compactness of the embedding  $F_\omega(\T^d) \hookrightarrow  L_2(\T^d)$
and on the other side $\lim_{n\to \infty}\, \sigma_n =0$.

\begin{lemma}\label{sing}
	Let $(\omega(\bk))_{\bk\in \Z^d}$  be a sequence of positive  numbers such that \eqref{ws-07} is satisfied.
	Let $s_n \in \{x_n,b_n,d_n,a_n\}$.
	Then we have 
	\begin{equation*}
		s_n\big(id: F_\omega(\T^d) \to L_2(\T^d)\big)=\sigma_n,\qquad n\in \N\, .
	\end{equation*}
\end{lemma}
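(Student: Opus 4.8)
The plan is to identify $id: F_\omega(\T^d) \to L_2(\T^d)$ with a diagonal operator on $\ell_2$ and then invoke the known values of the four $s$-numbers for diagonal operators. Concretely, since $\{e^{{\rm i}\bk\bx}: \bk\in\Z^d\}$ is an orthonormal basis of $L_2(\T^d)$, the map $f \mapsto (\hat f(\bk))_{\bk\in\Z^d}$ is a unitary identification of $L_2(\T^d)$ with $\ell_2(\Z^d)$, and under this identification $F_\omega(\T^d)$ becomes the weighted space $\ell_2(\Z^d; (\omega(\bk))_\bk)$. Composing with the natural isometry between that weighted space and $\ell_2(\Z^d)$ (namely $c \mapsto (\omega(\bk)c(\bk))_\bk$), the embedding $id: F_\omega(\T^d)\to L_2(\T^d)$ is transformed into the diagonal operator $D_\tau: \ell_2(\Z^d)\to\ell_2(\Z^d)$ with entries $\tau(\bk)=1/\omega(\bk)$. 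Because all these transformations are isometric isomorphisms, the ideal property \eqref{ws-04} (applied in both directions) shows that each of $x_n, b_n, d_n, a_n$ of $id: F_\omega(\T^d)\to L_2(\T^d)$ equals the corresponding $s$-number of $D_\tau$.

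Next I would recall the classical fact that for a diagonal operator $D_\tau:\ell_2\to\ell_2$ with $\tau\in c_0$, all four of these $s$-numbers coincide and equal the $n$th largest entry of $(|\tau(\bk)|)_\bk$ in non-increasing order. For approximation numbers this is standard (truncation to the $n-1$ largest entries is optimal, by a rank/spectral argument); for Kolmogorov numbers one uses $a_n = d_n$ in the Hilbert space setting, already quoted in the excerpt as \cite[Proposition 11.6.2]{Pie80B}; for Weyl numbers one uses the definition together with $a_n(D_\tau A) = b_n(D_\tau A)$ for $A:\ell_2\to\ell_2$ (again Hilbert–Hilbert), and the fact that the supremum over such $A$ with $\|A\|\le1$ is attained by the coordinate projection onto the $n$ largest entries; for Bernstein numbers one tests on the $n$-dimensional coordinate subspace spanned by the indices of the $n$ largest entries to get the lower bound $\ge\sigma_n$, while the upper bound $b_n\le a_n=\sigma_n$ follows from the general inequality $b_n\le a_n$ quoted in the excerpt. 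By \eqref{ws-07} we have $\tau\in c_0$ and its non-increasing rearrangement is exactly $(\sigma_n)_{n\in\N}$, so each $s$-number equals $\sigma_n$.

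The only mildly delicate points are bookkeeping ones: making sure the rearrangement of a doubly-indexed sequence $(\tau(\bk))_{\bk\in\Z^d}$ is well-defined (it is, since $\tau\in c_0$ forces each level set $\{|\tau(\bk)|>\varepsilon\}$ to be finite), and citing the correct Hilbert-space identities ($a_n=d_n$, and $a_n=b_n$ for operators into a Hilbert space) which are already in the excerpt. I expect the main obstacle — really the only thing requiring a word of care rather than pure citation — to be the Weyl-number computation, i.e. verifying that the supremum over $A:\ell_2\to\ell_2$, $\|A\|\le1$, of $a_n(D_\tau A)$ equals $\sigma_n$; the upper bound $x_n(D_\tau)\le\sigma_n$ is immediate from the ideal property since $x_n\le a_n$, and the lower bound comes from choosing $A$ to be the isometric embedding of $\ell_2^n$ onto the span of the coordinates realizing $\sigma_1,\dots,\sigma_n$, for which $a_n(D_\tau A)=\sigma_n$. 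Everything else is a direct translation through unitary maps plus the cited inequalities, so the proof is short.
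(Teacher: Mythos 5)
Your argument is correct and follows essentially the same route as the paper: reduce the embedding via unitary identifications to a diagonal operator on $\ell_2$, use the classical value of the approximation numbers of such diagonal operators, and then transfer to $x_n,b_n,d_n$ through the Hilbert-space coincidences of $s$-numbers (the paper simply cites Pietsch's result that all these numbers coincide for operators between Hilbert spaces, which subsumes your case-by-case verification).
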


This lemma represents nothing but a convenient reformulation of a classical result on the 
behavior of approximation numbers of certain diagonal operators, see e.g. Pietsch \cite[Theorem 11.3.2.]{Pie80B} or
Pinkus \cite[Theorem IV.2.2]{Pin85B}.
Comments on the history may be found in Pietsch \cite[6.2.1.3]{Pie07B}.


\section{$s$-Numbers of embeddings of weighted Wiener algebras}
\label{weite1}


In this section,  we will consider  two different cases, namely 
$s_n\big(id: \mathcal{A}_\omega(\T^d) \to \mathcal{A}(\T^d)\big)$
and 
$s_n\big(id: \mathcal{A}_\omega(\T^d) \to L_2(\T^d)\big)$, 
where $s_n \in \{a_n, b_n, d_n, x_n\}$.
In addition a supplement to the case $id: \mathcal{A}_\omega(\T^d) \to L_\infty(\T^d)$
will be given.


\subsection{$s$-Numbers of $id: ~ \mathcal{A}_\omega(\T^d) \to \mathcal{A}(\T^d)$}


The main result in this section will be the 
useful identity \eqref{ws-12b} below, very easy to prove.
Before calculating
$s_n\big(id: \mathcal{A}_\omega(\T^d) \to \mathcal{A}(\T^d)\big)$ we have the following.

\begin{theorem}\label{satz1}
	Let $\omega = (\omega(\bk))_{\bk\in \Z^d}$ be a weight satisfying 
	\eqref{ws-07}. Let $s_n \in \{x_n, b_n, d_n, a_n\}$. \\
	{\rm (i)} Then we have 
	\be\label{ws-08}
	s_n\big(id: \mathcal{A}_\omega(\T^d) \to \mathcal{A}(\T^d)\big)
	=\sigma_n,\qquad n\in \N\, .
	\ee
	{\rm (ii)} Let $n \in \N$.
	Let $\Lambda \subset \Z^d$ be a set with the two properties:
	\begin{itemize}
		\item The cardinality $|\Lambda|$ of $\Lambda$ is $n-1$.
		\item For any $\bk\in \Lambda$ and any $\bl \not\in \Lambda$ it holds $\omega (\bk)\le \omega (\bl)$.
	\end{itemize}
	For $f\in L_1 (\T^d)$ we define 
	\[
	S_\Lambda f (\bx):= \sum_{\bk \in \Lambda} \hat{f}(\bk) \, e^{{\rm i} \bk \bx}\,, \qquad \bx\in \T^d\, .
	\]
	Then 
	\beqq
	a_n\big(id: \mathcal{A}_\omega(\T^d) \to \mathcal{A}(\T^d)\big) 
	= \sup_{\|f\, |\mathcal{A}_\omega(\T^d)\|\le 1}\, \| \, f-S_\Lambda  f\, |\mathcal{A}(\T^d)\|\, .
	\eeqq
\end{theorem}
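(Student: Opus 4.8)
The plan is to prove both parts together by reducing to the diagonal-operator picture, exactly as in Lemma~\ref{sing}, but now with the Wiener algebra as target. First I would set up the natural isometry: the map $f \mapsto (\omega(\bk)\hat{f}(\bk))_{\bk}$ identifies $\mathcal{A}_\omega(\T^d)$ with $\ell_1(\Z^d)$, while $f \mapsto (\hat{f}(\bk))_{\bk}$ identifies $\mathcal{A}(\T^d)$ with $\ell_1(\Z^d)$, and under these identifications $id: \mathcal{A}_\omega(\T^d) \to \mathcal{A}(\T^d)$ becomes the diagonal operator $D_\tau: \ell_1 \to \ell_1$ with diagonal $\tau = (1/\omega(\bk))_{\bk}$. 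Thus \eqref{ws-08} is equivalent to the statement that all four $s$-numbers of $D_\tau: \ell_1 \to \ell_1$ equal the non-increasing rearrangement $(\sigma_n)_n$ of $\tau$. Since approximation numbers are the largest $s$-numbers and Bernstein numbers are (in the relevant Pietsch axioms) among the smallest considered here, the chain $b_n \le d_n \le a_n$ reduces everything to two inequalities: the upper bound $a_n(D_\tau) \le \sigma_n$ and the lower bound $b_n(D_\tau) \ge \sigma_n$ (together with $x_n \le a_n$ always, and a matching lower bound for $x_n$).

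For the upper bound on $a_n$, I would take $\Lambda$ as in part (ii) — the index set of the $n-1$ largest values of $1/\omega$ — and use $A = S_\Lambda$ as the approximating operator of rank $n-1$. Then $\|id - S_\Lambda: \mathcal{A}_\omega(\T^d) \to \mathcal{A}(\T^d)\| = \sup_{\|f|\mathcal{A}_\omega\|\le 1}\|f - S_\Lambda f|\mathcal{A}(\T^d)\|$, and for $f$ with $\|f|\mathcal{A}_\omega\| \le 1$ one estimates
\[
\|f - S_\Lambda f|\mathcal{A}(\T^d)\| = \sum_{\bk \notin \Lambda} |\hat{f}(\bk)| = \sum_{\bk \notin \Lambda} \frac{1}{\omega(\bk)}\,\omega(\bk)|\hat{f}(\bk)| \le \sup_{\bk \notin \Lambda} \frac{1}{\omega(\bk)} \cdot \sum_{\bk \notin \Lambda} \omega(\bk)|\hat{f}(\bk)| \le \sigma_n\,,
\]
since $1/\omega(\bk) \le \sigma_n$ for every $\bk \notin \Lambda$ by the defining property of $\Lambda$. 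This gives $a_n \le \sigma_n$, and simultaneously proves the identity in part (ii) once the reverse inequality for $a_n$ is in hand. For the lower bound, I would exhibit an $n$-dimensional subspace witnessing $b_n \ge \sigma_n$: let $\bk_1,\dots,\bk_n$ be indices realizing $\sigma_1,\dots,\sigma_n$ (the $n$ largest values of $1/\omega$), and take $L_n = \linspan\{e^{{\rm i}\bk_j \bx}: j=1,\dots,n\}$. For $f = \sum_{j=1}^n c_j e^{{\rm i}\bk_j\bx} \in L_n$ one has $\|f|\mathcal{A}_\omega\| = \sum_j \omega(\bk_j)|c_j|$ and $\|f|\mathcal{A}(\T^d)\| = \sum_j |c_j|$, so the quotient $\|f|\mathcal{A}(\T^d)\|/\|f|\mathcal{A}_\omega\|$ is minimized over coordinate directions; choosing $f = e^{{\rm i}\bk_n\bx}$ gives ratio $1/\omega(\bk_n) = \sigma_n$, hence $b_n \ge \sigma_n$. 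The same subspace, composed with the coordinate embedding $\ell_2^n \to \ell_1^n \subset \ell_1$, handles $x_n$: one checks $x_n(D_\tau) \ge \sigma_n$ by feeding in the rank-one maps $A: \ell_2 \to \mathcal{A}_\omega$ sending the first basis vector to $\omega(\bk_j)^{-1}e^{{\rm i}\bk_j\bx}$ (norm $1$) and using $a_n(TA) \ge$ the $n$-th largest of the resulting diagonal.

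I do not expect any genuine obstacle here; the result is, as the authors say, easy. The one point requiring a little care is the very last claim of part (ii) — that the supremum is attained by this particular $S_\Lambda$ and not merely bounded by $\sigma_n$ — but that follows because the upper-bound computation shows $\|id - S_\Lambda\| \le \sigma_n = a_n$, while $a_n \le \|id - S_\Lambda\|$ trivially since $S_\Lambda$ has rank $n-1 < n$; combining gives equality. A secondary subtlety worth a sentence is to note that $S_\Lambda$ is well-defined and bounded from $\mathcal{A}_\omega(\T^d)$ (indeed from $L_1(\T^d)$) since it only involves finitely many Fourier coefficients, and that it maps into $\mathcal{A}(\T^d)$ trivially. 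Everything else is the standard "$s$-numbers of a diagonal operator equal the rearranged diagonal" argument transplanted from the $\ell_2$ setting of Lemma~\ref{sing} to the $\ell_1$ setting, using that the ideal property \eqref{ws-04} and the ordering $b_n \le d_n \le a_n$ hold for all Banach spaces.
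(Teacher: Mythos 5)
Your upper bound and your part (ii) are essentially the paper's own argument: the estimate $\sum_{\bk\notin\Lambda}|\hat f(\bk)|\le \sup_{\bl\notin\Lambda}\omega(\bl)^{-1}\,\|f|\mathcal{A}_\omega(\T^d)\|\le\sigma_n\|f|\mathcal{A}_\omega(\T^d)\|$ plus $\mathrm{rank}\,S_\Lambda=n-1$ is exactly Step 2 of the paper's proof and simultaneously gives $a_n\le\sigma_n$. Your Bernstein lower bound is also correct: on the span of the $n$ exponentials with the largest values of $1/\omega(\bk)$ one has $\sum_j|c_j|\ge\sigma_n\sum_j\omega(\bk_j)|c_j|$, so $b_n\ge\sigma_n$, and together with $b_n\le d_n\le a_n$ (legitimate here, since $a_n\le\sigma_n\to0$ by \eqref{ws-07} makes the embedding compact) this settles $b_n,d_n,a_n$ in a more self-contained way than the paper, which instead factors $id$ through the diagonal operator $D_\omega:\ell_1(\Z^d)\to\ell_1(\Z^d)$ in both directions (the lifting maps being isometric isomorphisms) and quotes Pietsch's theorem that all $s$-numbers of such diagonal operators equal $\sigma_n$.

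The genuine gap is the Weyl-number lower bound. As written, "feeding in rank-one maps $A$" proves nothing, since $a_n(TA)=0$ for $n\ge 2$ whenever $A$ has rank one. The natural repair, $A:\ell_2^n\to\mathcal{A}_\omega(\T^d)$ with $Ae_j=\omega(\bk_j)^{-1}e^{{\rm i}\bk_j\bx}$, fails because this map has norm $\sqrt n$ (its image carries an $\ell_1$-type norm), and after normalization one only obtains $x_n\ge n^{-1/2}\sigma_n$. You also cannot compare with $b_n$: the inequality \eqref{weylb} requires a Hilbert target, and the paper explicitly notes that no general inequality between Weyl and Bernstein (or Kolmogorov) numbers is available. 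In fact, no finite coordinate-section argument can close this gap: if $B:\ell_2\to\ell_1^n$ has rows $b_i$, then averaging over signs gives $\sum_i\|b_i\|_2^2\le\|B\|^2$, and deleting the row of smallest $\ell_2$-norm yields $a_n(B)\le n^{-1/2}\|B\|$; hence $x_n(id:\ell_1^n\to\ell_1^n)\le n^{-1/2}$, i.e.\ the strong norming property that makes the subspace/section argument work for $a_n$, $d_n$, $b_n$ fails for Weyl numbers on $\ell_1^n$. This is precisely the point where the paper relies on the cited result for diagonal operators on $\ell_1$ (Theorem 7.1 in Pietsch 1974, Proposition 2.9.5 in his 1987 book); a blind proof must either invoke such a result or supply a genuinely different argument for $x_n\ge\sigma_n$, and the sketch given does neither.
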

Observe that the left-hand side in the last equation is independent of the
choice of set $\Lambda$, therefore the  
right-hand side  is independent of $\Lambda$ as well.

\begin{proof}
	{\em Step 1.} 
	Proof of (i). We consider the following commutative diagram
	
	\begin{equation*}
		\begin{CD}
			\mathcal{A}_\omega(\T^d)  @ > id >> \mathcal{A}(\T^d) \\
			@VV A V @AA B A\\
			\ell_1(\Z^d) @ > D_\omega >> \ell_1(\Z^d) \, ,
		\end{CD}
	\end{equation*}
	where the linear operators $A:\, \mathcal{A}_\omega(\T^d) \to \ell_1 (\Z^d)$, 	$D_\omega:\, \ell_1 (\Z^d) \to \ell_1 (\Z^d)$, and $B:\, \ell_1 (\Z^d) \to \mathcal{A}(\T^d)$ 
	are defined as 
\begin{equation}	\label{ws-12}
	\begin{aligned}
Af & : =  (\omega(\bk)\hat{f}(\bk))_{\bk\in \Z^d}\, ,  
\\
D_\omega\xi & :=   (\xi(\bk)/\omega(\bk))_{\bk\in \Z^d}\,  , \qquad \xi=(\xi(\bk))_{\bk\in \Z^d}
\\
(B\xi)(\bx)& :=  \sum_{\bk\in \Z^d} \xi(\bk)\,  e^{{\rm i}\bk \bx}\, , \qquad \bx \in \T^d\, .
	\end{aligned}
\end{equation}

	It is obvious that $\|A\|=\|B\|=1$.
	From the ideal property \eqref{ws-04} and the identity $id = B\,D_\omega \, A$ it follows
	\[
	s_n\big(id: \mathcal{A}_\omega(\T^d) \to \mathcal{A}(\T^d)\big) \le s_n \big(D_\omega: \ell_1 (\Z^d) \to \ell_1(\Z^d)\big) \, .
	\]
	Theorem 7.1 in \cite{Pie74}, see also Proposition 2.9.5 in \cite{Pie87B}, yields
	\[
	s_n\big(D_\omega: \ell_1 (\Z^d) \to \ell_1(\Z^d)\big) = \sigma_n\, .
	\]
	This proves the estimate from above. 
	Now we employ the same type of arguments with respect to the diagram

	\begin{equation*}
		\begin{CD}
			\ell_1(\Z^d) @ > D_\omega >> \ell_1(\Z^d)
			\\
			@VV A^{-1} V @AA B^{-1} A\\
			\mathcal{A}_\omega(\T^d)  @ > id >> \mathcal{A}(\T^d)\,.
		\end{CD}
	\end{equation*}

	It is easy to see that the operators $A$ and $B$ are invertible and  that $\|A^{-1}\|=\|B^{-1}\|=1$.
	As above we conclude 
	\[
	\sigma_n = s_n\big(D_\omega: \ell_1 (\Z^d) \to \ell_1(\Z^d)\big) 
	\le 
	s_n\big(id: \mathcal{A}_\omega(\T^d) \to \mathcal{A}(\T^d)\big) \, ,
	\]
	which completes the proof of (i).
	\\
	{\em Step 2.} Proof of (ii). By definition of the sequence $(\sigma_n)_{n\in \N}$
	we have 
\begin{align*}
	\| \, f- S_\Lambda f \, |\mathcal{A}(\T^d)\| & = 
\sum_{\bk \in \Z^d\setminus \Lambda} |\hat{f}(\bk)| \le 
\sup_{\bl  \in \Z^d\setminus \Lambda}\, \frac{1}{\omega (\bl)}\, \Bigg(
\sum_{\bk \in \Z^d\setminus \Lambda} |\omega (\bk)\, \hat{f}(\bk)|\Bigg)
\le   \sigma_n \, \|\, f\, |A_\omega (\T^d)\|\, 
\end{align*}
	which implies  
	\[
	\| \, id - S_\Lambda  \|\le \sigma_n =
	a_n\big(id: \mathcal{A}_\omega(\T^d) \to \mathcal{A}(\T^d)\big) \, .
	\]
	From  the definition of $a_n$ we know $a_n(id) \le \| \, id - S_\Lambda \|$.
	This proves (ii).
\end{proof}

\begin{cor}\label{cor1}
	Let $\omega = (\omega(\bk))_{\bk\in \Z^d}$ be a weight satisfying 
	\eqref{ws-07}. Let $s_n \in \{x_n, b_n, d_n, a_n\}$. 
	Then, for all $n\in \N$, we have 
	\be\label{ws-12b}
	s_n\big(id: \mathcal{A}_\omega(\T^d) \to \mathcal{A}(\T^d)\big)  = s_n\big(id: F_\omega(\T^d) \to L_2(\T^d)\big) \, .
	\ee
	Furthermore, in both cases the operator $S_\Lambda$, defined in Theorem \ref{satz1}(ii), realizes the optimal approximation, 
	i.e.,
	\[
	\| \, id - S_\Lambda   :\mathcal{A}_\omega (\T^d) \to \mathcal{A}(\T^d)\| =  a_n\big(id: \mathcal{A}_\omega(\T^d) \to \mathcal{A}(\T^d)\big) 
	\]
	and 
	\[
	\| \, id - S_\Lambda   :F_\omega (\T^d) \to L_2(\T^d)\| =  a_n\big(id: F_\omega(\T^d) \to L_2(\T^d)\big) \, .
	\]
\end{cor}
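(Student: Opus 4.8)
The plan is to simply combine the two preceding results. By Lemma \ref{sing} we have $s_n\big(id: F_\omega(\T^d) \to L_2(\T^d)\big)=\sigma_n$ for $s_n\in\{x_n,b_n,d_n,a_n\}$, and by Theorem \ref{satz1}(i) we have $s_n\big(id: \mathcal{A}_\omega(\T^d) \to \mathcal{A}(\T^d)\big)=\sigma_n$ for the same list of $s$-numbers. Identity \eqref{ws-12b} is then immediate: both sides equal $\sigma_n$. So the only genuine content left is the statement about $S_\Lambda$ realizing the optimal approximation in each of the two settings.

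For the $\mathcal{A}_\omega \to \mathcal{A}$ case, there is nothing new to do: this is precisely Theorem \ref{satz1}(ii) together with part (i), which gives $\|id - S_\Lambda : \mathcal{A}_\omega(\T^d)\to\mathcal{A}(\T^d)\| = \sigma_n = a_n\big(id: \mathcal{A}_\omega(\T^d)\to\mathcal{A}(\T^d)\big)$. For the $F_\omega \to L_2$ case I would repeat the short computation from Step 2 of the proof of Theorem \ref{satz1} verbatim, only with $\ell_1$-type sums replaced by $\ell_2$-type sums. Concretely: for $\|f|F_\omega(\T^d)\|\le 1$, using that $\{e^{{\rm i}\bk\bx}\}$ is an orthonormal basis,
\[
\|f - S_\Lambda f\,|\,L_2(\T^d)\|^2 = \sum_{\bk\in\Z^d\setminus\Lambda}|\hat{f}(\bk)|^2 \le \Big(\sup_{\bl\in\Z^d\setminus\Lambda}\frac{1}{\omega(\bl)^2}\Big)\sum_{\bk\in\Z^d\setminus\Lambda}\omega(\bk)^2|\hat{f}(\bk)|^2 \le \sigma_n^2\,\|f|F_\omega(\T^d)\|^2,
\]
since by the defining properties of $\Lambda$ the quantity $1/\omega(\bl)$ over $\bl\notin\Lambda$ is at most the $n$th largest value of $1/\omega$, i.e. $\sigma_n$. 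Hence $\|id - S_\Lambda : F_\omega(\T^d)\to L_2(\T^d)\|\le\sigma_n$. Since $\rank(S_\Lambda)=|\Lambda|=n-1$, the definition of $a_n$ gives $a_n\big(id: F_\omega(\T^d)\to L_2(\T^d)\big)\le\|id - S_\Lambda : F_\omega(\T^d)\to L_2(\T^d)\|$, and combined with Lemma \ref{sing} ($a_n = \sigma_n$ here, as all $s$-numbers coincide in the Hilbert space setting) the chain of inequalities collapses to equalities, proving the claim.

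I do not expect any real obstacle: the corollary is a bookkeeping consequence of Lemma \ref{sing} and Theorem \ref{satz1}. The only point requiring a word of care is that in the $F_\omega\to L_2$ estimate one must make sure the supremum over $\bl\notin\Lambda$ of $1/\omega(\bl)$ really equals $\sigma_n$ (the $n$th entry of the non-increasing rearrangement): this follows because $\Lambda$ collects $n-1$ indices with the smallest weights $\omega(\bk)$, hence the largest reciprocals $1/\omega(\bk)$, so the largest reciprocal remaining is exactly the $n$th largest overall, which is $\sigma_n$. Everything else is the formal combination already indicated.
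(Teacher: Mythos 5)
Your proposal is correct, and for the identity \eqref{ws-12b} it is exactly the paper's argument: combine Lemma \ref{sing} with Theorem \ref{satz1}(i), both sides being $\sigma_n$. The only (minor) divergence concerns the optimality of $S_\Lambda$ for $id:F_\omega(\T^d)\to L_2(\T^d)$: the paper simply cites Remark 4.2 in \cite{KSU14}, whereas you reprove it inline by the Parseval/rearrangement estimate $\|f-S_\Lambda f\,|\,L_2(\T^d)\|^2\le \sigma_n^2\,\|f\,|F_\omega(\T^d)\|^2$ together with $\rank(S_\Lambda)=n-1$ and $a_n=\sigma_n$ from Lemma \ref{sing}; this is a sound and self-contained substitute for the citation (note that only $\sup_{\bl\notin\Lambda}1/\omega(\bl)\le\sigma_n$ is actually needed, the reverse being supplied automatically by the lower bound $a_n=\sigma_n$).
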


\begin{proof}
	Lemma \ref{sing} and Theorem \ref{satz1} yield the first part of the claim.
	The second part is a consequence of Theorem \ref{satz1}(ii) and Remark 4.2 in \cite{KSU14}.
\end{proof}


\subsection{$s$-Numbers of $id: ~ \mathcal{A}_\omega(\T^d) \to L_2(\T^d)$}


Now we turn to the  $s$-numbers of the embedding $\mathcal{A}_\omega(\T^d) \hookrightarrow L_2(\T^d)$. Here we need our convention.
Weyl and Bernstein numbers  have a different asymptotic behavior compared to the asymptotic behavior of 
approximation and Kolmogorov numbers.

\begin{theorem}
	\label{thm:bw} 
	Let $\omega = (\omega(\bk))_{\bk\in \Z^d}$ be a weight satisfying 
	\eqref{ws-07}.
	\\
	{\rm (i)} In case of Weyl and Bernstein numbers  we have
	\begin{equation*}
		v_n\big(id: \mathcal{A}_\omega(\T^d) \to L_2(\T^d)\big)= \Bigg(\sum_{k=1}^n \sigma_k^{-2}\Bigg)^{-1/2}, \qquad n\in \N\,.  
	\end{equation*}
	{\rm (ii)}
	For approximation and Kolmogorov numbers it holds	
	\begin{equation*} 
		\begin{aligned}
			u_n\big(id: \mathcal{A}_\omega(\T^d) \to L_2(\T^d)\big)
			= 
			\sup_{h\geq n} \Bigg(\frac{h-n+1}{\sum_{k=1}^h\sigma_k^{-2}}\Bigg)^{1/2}\,,\qquad n\in \N.  
		\end{aligned}
	\end{equation*} 
\end{theorem}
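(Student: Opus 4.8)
My plan is to reduce both parts to computing the Bernstein, Weyl, Kolmogorov and approximation numbers of the diagonal operator $D_\sigma\colon\ell_1(\N)\to\ell_2(\N)$, $D_\sigma(\xi_k)_k=(\sigma_k\xi_k)_k$, exactly as in Step~1 of the proof of Theorem~\ref{satz1}. Fix a bijection $\pi\colon\N\to\Z^d$ with $\omega(\pi(1))\le\omega(\pi(2))\le\dots$, so that $1/\omega(\pi(k))=\sigma_k$, and set $Af=(\omega(\pi(k))\hat f(\pi(k)))_k$ and $Bf=(\hat f(\pi(k)))_k$. Then $A\colon\mathcal A_\omega(\T^d)\to\ell_1(\N)$ and $B\colon L_2(\T^d)\to\ell_2(\N)$ are isometric isomorphisms (for $A$, surjectivity uses $1/\omega(\bk)\le\sigma_1$), and $B\circ id=D_\sigma\circ A$. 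Feeding $id=B^{-1}D_\sigma A$ and $D_\sigma=B\,id\,A^{-1}$ into the ideal property \eqref{ws-04} gives $s_n(id\colon\mathcal A_\omega(\T^d)\to L_2(\T^d))=s_n(D_\sigma\colon\ell_1(\N)\to\ell_2(\N))$ for every $s_n\in\{a_n,b_n,d_n,x_n\}$, and from here on everything is a statement about a diagonal operator.

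\textbf{Bernstein and Weyl numbers.} Write $\beta_n:=\big(\sum_{k=1}^n\sigma_k^{-2}\big)^{-1/2}$. Since $\ell_2$ is a Hilbert space, $x_n(D_\sigma)\le b_n(D_\sigma)$ by \eqref{weylb}, so it suffices to prove $x_n(D_\sigma)\ge\beta_n\ge b_n(D_\sigma)$. For the first inequality, plug into the definition of the Weyl numbers the diagonal operator $R\colon\ell_2\to\ell_1$ with $Re_k=\beta_n\sigma_k^{-1}e_k$ for $k\le n$ and $Re_k=0$ otherwise: Cauchy--Schwarz gives $\|R\colon\ell_2\to\ell_1\|\le1$, while $D_\sigma R=\beta_n P_n$, where $P_n$ is the orthogonal projection of $\ell_2$ onto the span of $e_1,\dots,e_n$, so $x_n(D_\sigma)\ge a_n(D_\sigma R)=\beta_n$. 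For the second inequality, let $L_n\subset\ell_1$ be an arbitrary $n$-dimensional subspace; $D_\sigma$ is injective, hence $V:=D_\sigma(L_n)$ is $n$-dimensional, and substituting $y=D_\sigma x$ one has
\[
\inf_{0\ne x\in L_n}\frac{\|D_\sigma x\|_2}{\|x\|_1}=\Big(\sup_{y\in V,\ \|y\|_2=1}\|D_\sigma^{-1}y\|_1\Big)^{-1}.
\]
By $\ell_1$--$\ell_\infty$ duality the right-hand supremum equals $\sup\{\|P_VD_\sigma^{-1}z\|_2:z\ \text{finitely supported},\ \|z\|_\infty\le1\}$, where $P_V$ is the orthogonal projection onto $V$. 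Averaging over $z$ with entries $\pm1$ on $\{1,\dots,N\}$, and using an orthonormal basis of $V$, gives $\EE_z\|P_VD_\sigma^{-1}z\|_2^2=\sum_{k=1}^N\sigma_k^{-2}\|P_Ve_k\|_2^2$; since $\sum_k\|P_Ve_k\|_2^2=\tr P_V=n$, $\|P_Ve_k\|_2^2\le1$, and $(\sigma_k^{-2})_k$ is non-decreasing, a rearrangement inequality bounds this below by $\sum_{k=1}^n\sigma_k^{-2}$. Thus $\sup_{y\in V,\|y\|_2=1}\|D_\sigma^{-1}y\|_1\ge\beta_n^{-1}$ for every $L_n$, i.e. $b_n(D_\sigma)\le\beta_n$, which closes the chain and proves (i).

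\textbf{Kolmogorov and approximation numbers.} Here $a_n(D_\sigma)=d_n(D_\sigma)$ because $\ell_2$ is Hilbert, and since $\dist_{\ell_2}(\cdot,L)$ is convex while the unit ball of $\ell_1$ is the closed convex hull of $\{\pm e_k\}$, $d_n(D_\sigma)=\inf\{\max_k\sigma_k\dist_{\ell_2}(e_k,L):\dim L\le n-1\}$. For the lower bound, given an admissible $L$ and $h\ge n$, project onto the first $h$ coordinates: this replaces $L$ by $L'\subset\R^h$ with $\dim L'\le n-1$ and does not increase $\dist_{\ell_2}(e_k,\cdot)$ for $k\le h$, so with $U:=\R^h\ominus L'$, of dimension $\ge m:=h-n+1$, one has $\dist_{\ell_2}(e_k,L)\ge\|P_Ue_k\|_2$ for $k\le h$ and
\[
\Big(\max_{k\le h}\sigma_k\|P_Ue_k\|_2\Big)^2\sum_{k=1}^h\sigma_k^{-2}\ \ge\ \sum_{k=1}^h\|P_Ue_k\|_2^2\ =\ \dim U\ \ge\ m,
\]
hence $\max_k\sigma_k\dist_{\ell_2}(e_k,L)\ge\big(\tfrac{h-n+1}{\sum_{k=1}^h\sigma_k^{-2}}\big)^{1/2}$; taking the supremum over $h$ yields the lower bound. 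For the matching upper bound, let $c$ denote the claimed value, put $h:=\max\big(n,\#\{k:\sigma_k>c\}\big)$, so that $\sigma_{h+1}\le c$, and note that $t_k:=\min(1,c^2\sigma_k^{-2})$ satisfy $\sum_{k=1}^h t_k\ge h-n+1$ --- which is exactly the defining inequality of $c$ for the index $\#\{k\le h:\sigma_k>c\}$. By the Schur--Horn theorem there is a rank-$(h-n+1)$ orthogonal projection $P$ on $\R^h$ with $\langle Pe_k,e_k\rangle\le t_k$ for all $k\le h$; then $L:=\ker P$ has dimension $n-1$, $\dist_{\ell_2}(e_k,L)=\langle Pe_k,e_k\rangle^{1/2}$ for $k\le h$ and $=1$ for $k>h$, so $\max_k\sigma_k\dist_{\ell_2}(e_k,L)\le\max\{c,\sigma_{h+1}\}=c$, i.e. $d_n(D_\sigma)\le c$.

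\textbf{Where the difficulty lies.} The reduction and the two lower bounds are soft; the one genuinely delicate estimate is the \emph{upper} bound for the Bernstein numbers in (i), where a crude estimate through $\|x\|_1\ge\|x\|_2$ only gives $b_n(D_\sigma)\le\sigma_n$, which is strictly larger than $\beta_n$, so the real $\ell_1\to\ell_2$ improvement must be exploited --- this is what the Rademacher average over sign vectors achieves. In (ii) the Kolmogorov-number description makes the lower bound a one-line trace identity, and the only non-elementary ingredient for the upper bound is the Schur--Horn description of diagonals of orthogonal projections. Since, however, the required facts about $s$-numbers of diagonal operators $\ell_1\to\ell_2$ are by now classical (see Pietsch \cite{Pie80B}, and \cite{KSU14,KSU15}), in the actual write-up it is enough to perform the reduction and then quote them.
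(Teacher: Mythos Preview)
Your proof is correct and follows precisely the paper's approach: reduce via isometric factorization and the ideal property \eqref{ws-04} to $s_n(D_\sigma:\ell_1\to\ell_2)$, then invoke the known closed-form expressions for these numbers (the paper cites \cite{Pie74,Gal91} for part~(i) and \cite[Theorem~11.11.7]{Pie80B} for part~(ii)). Your additional self-contained derivations of these diagonal-operator formulas --- the Rademacher/trace argument for the Bernstein upper bound, the trace identity for the Kolmogorov lower bound, and Schur--Horn for the matching upper bound --- are correct and illuminating, though (as you yourself note in the final paragraph) unnecessary for the write-up.
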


\begin{proof} The proof is quite similar to the proof of Theorem \ref{satz1}.
	Again let $s_n \in \{x_n, b_n , d_n, a_n\}$.
	We consider the following diagram
	\begin{equation*}
		\begin{CD}
			\mathcal{A}_\omega(\T^d)  @ > id >> L_2(\T^d) \\
			@VV A V @AA B A\\
			\ell_1(\Z^d) @ > D_\omega >> \ell_2(\Z^d) \, 
		\end{CD}
	\end{equation*}
	where $A, B$ and  $D_\omega $
	are defined as in \eqref{ws-12}.
	Again we have  $\|A\|=\|B\|=1$. By the ideal property of these numbers, 
	see \eqref{ws-04}, we obtain
	\begin{equation*}
		\begin{aligned}
			s_n\big(id: \mathcal{A}_\omega (\T^d)\to L_2(\T^d)\big)
			&\leq \, s_n\big(D_\omega: \ell_1(\Z^d)\to \ell_2(\Z^d)\big)\,.
		\end{aligned}
	\end{equation*}
	Let $D_\sigma$ be the diagonal operator from $\ell_1(\N)\to \ell_2(\N)$ defined by 
	$D_\sigma \xi :=(\sigma_k \, \xi_k)_{k\in \N}$. Since $(\sigma_n)_{n\in \N}$ is the non-increasing rearrangement of 
	$(1/\omega(\bk))_{\bk\in \Z^d}$ we obtain 
	\begin{equation*}
		\begin{aligned}
			s_n\big(D_\omega: \ell_1(\Z^d)\to \ell_2(\Z^d)\big)
			& =\,  	s_n\big(D_\sigma : \ell_1(\N)\to \ell_2(\N)\big) \, ,
		\end{aligned}
	\end{equation*}
	which leads to 
	\begin{equation}\label{eq:<=}
		\begin{aligned}
			s_n\big(id: \mathcal{A}_\omega (\T^d)\to L_2(\T^d)\big) \leq 
			s_n\big(D_\sigma : \ell_1(\N)\to \ell_2(\N)\big) .
		\end{aligned}
	\end{equation}
	To prove the reverse direction we consider the modified diagram
	\begin{equation*}
	\begin{CD}
		\ell_1(\Z^d) @ > D_\omega >> \ell_2(\Z^d)
		\\
		@VV A^{-1} V @AA B^{-1} A\\
		\mathcal{A}_\omega(\T^d)  @ > id >> L_2(\T^d)\,.
	\end{CD}
\end{equation*}
	By the same argument 
	as used above we get the reverse inequality  of \eqref{eq:<=}. 
	Consequently we find that
	\begin{equation*} 
		\begin{aligned}
			s_n\big(id: \mathcal{A}_\omega (\T^d)\to L_2(\T^d)\big) = 
			s_n\big(D_\sigma : \ell_1(\N)\to \ell_2(\N)\big) .
		\end{aligned}
	\end{equation*}
	Finally, from 
	\begin{equation*} 
		\begin{aligned}
			v_n\big(D_\sigma : \ell_1(\N)\to \ell_2(\N)\big) 
			=  \Bigg(\sum_{k=1}^{n}\sigma_k^{-2}\Bigg)^{-1/2}\,
		\end{aligned}
	\end{equation*}
	see \cite{Pie74,Gal91} and
	$$
	u_n\big(D_\sigma : \ell_1(\N)\to \ell_2(\N)\big)
	=
	\sup_{h\geq n} \Bigg(\frac{h-n+1}{\sum_{k=1}^h\sigma_k^{-2}}\Bigg)^{1/2},
	$$ \cite[Theorem 11.11.7]{Pie80B}, we obtain the desired results.
\end{proof}

\begin{theorem}
\label{thm-general} 
Let $s>0$,  $\beta\geq 0$ and $\omega$  be a weight satisfying \eqref{ws-07}. Assume that there exists a  number 
$C$ such that
\begin{equation}\label{eq-assumption}
 \lim_{n \to \infty} \, \frac{s_{n} \big(id: \, F_{\omega} (\T^d) \to L_2(\T^d)\big)}{n^{-s}(\ln n)^{\beta} \, } = C\, .
\end{equation}
	{\rm (i)} In case of Bernstein or Weyl numbers we have 
	\begin{equation*}
		\lim\limits_{n\to \infty} \frac{ v_n\big(id: \mathcal{A}_{\omega}(\T^d) \to L_2(\T^d)\big)}{n^{-s-\frac{1}{2}}(\ln n)^{\beta}}= 
		\sqrt{ 2s+1}\, C \,.
	\end{equation*}
	{\rm (ii)}
	For approximation or Kolmogorov numbers it holds
	\begin{equation*}
		\lim\limits_{n\to \infty} \frac{ u_n\big(id: \mathcal{A}_{\omega}(\T^d) \to L_2(\T^d)\big)}{n^{-s}(\ln n)^{\beta}}
		= \bigg(\frac{2s}{2s+1}\bigg)^s C\, .
	\end{equation*}
\end{theorem}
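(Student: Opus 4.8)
The plan is to read off exact formulas for the relevant widths from Theorem~\ref{thm:bw} and then translate the hypothesis \eqref{eq-assumption} into an asymptotic statement about $\sigma_n$. By Lemma~\ref{sing} we have $\sigma_n = s_n\big(id: F_\omega(\T^d)\to L_2(\T^d)\big)$, so \eqref{eq-assumption} says precisely that $\sigma_n \sim C\,n^{-s}(\ln n)^{\beta}$ as $n\to\infty$. Writing $S_h := \sum_{k=1}^{h}\sigma_k^{-2}$, Theorem~\ref{thm:bw} gives $v_n\big(id:\mathcal{A}_\omega(\T^d)\to L_2(\T^d)\big) = S_n^{-1/2}$ and $u_n\big(id:\mathcal{A}_\omega(\T^d)\to L_2(\T^d)\big) = \sup_{h\ge n}\big((h-n+1)/S_h\big)^{1/2}$, so both parts reduce to the asymptotics of $(S_h)_h$.

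For part~(i): since $\sigma_k^{-2}\sim C^{-2}k^{2s}(\ln k)^{-2\beta}$ is a regularly varying sequence of index $2s>-1$, Karamata's theorem (or a direct comparison with $\int_1^{n} x^{2s}(\ln x)^{-2\beta}\,\mathrm{d}x$) yields $S_n \sim \frac{C^{-2}}{2s+1}\,n^{2s+1}(\ln n)^{-2\beta}$. Substituting into $v_n = S_n^{-1/2}$ gives $v_n \sim \sqrt{2s+1}\,C\,n^{-s-1/2}(\ln n)^{\beta}$, which is the claim. (In the degenerate case $C=0$ one uses instead the one-sided bound $\sigma_k^{-2}\ge M\,k^{2s}(\ln k)^{-2\beta}$ for eventually all $k$ and arbitrary $M$, which forces the normalised limit to be $0$.)

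For part~(ii) I would study $\sup_{h\ge n}(h-n+1)/S_h$ via the substitution $h=\lceil yn\rceil$ with $y\ge 1$. On any fixed range $1\le y\le K$ one has $\ln h\sim\ln n$ uniformly, hence $S_h\sim\frac{C^{-2}}{2s+1}(yn)^{2s+1}(\ln n)^{-2\beta}$ uniformly in that range, so that
\[
\frac{h-n+1}{S_h}\;\sim\;(2s+1)\,C^{2}\,(\ln n)^{2\beta}\,n^{-2s}\,\frac{y-1}{y^{2s+1}}\,.
\]
The function $g(y)=(y-1)/y^{2s+1}$ attains its maximum on $[1,\infty)$ at $y^{\ast}=\frac{2s+1}{2s}$, with value $g(y^{\ast})=\frac{(2s)^{2s}}{(2s+1)^{2s+1}}$, and picking $K>y^{\ast}$ captures it. The tail $h>Kn$ is negligible: there $h-n+1\le h$ and $S_h\ge\frac12\cdot\frac{C^{-2}}{2s+1}h^{2s+1}(\ln h)^{-2\beta}$ for $n$ large, so $(h-n+1)/S_h$ is at most a constant times $(\ln h)^{2\beta}h^{-2s}$, which is eventually monotone decreasing and hence at most a constant times $K^{-2s}(\ln n)^{2\beta}n^{-2s}$; choosing $K$ large makes this smaller than the main term. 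The matching lower bound comes from evaluating at $h_n=\lceil y^{\ast}n\rceil\ge n$. Altogether $u_n^{2}\sim C^{2}(\ln n)^{2\beta}n^{-2s}\big(\tfrac{2s}{2s+1}\big)^{2s}$, i.e. $u_n\sim\big(\tfrac{2s}{2s+1}\big)^{s}C\,n^{-s}(\ln n)^{\beta}$.

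The delicate point is the uniformity in~(ii): the estimate for $(h-n+1)/S_h$ must hold with error terms vanishing as $n\to\infty$ \emph{simultaneously} over all admissible $h$, which is why one splits into the compact window $n\le h\le Kn$ — where the logarithmic factors are asymptotically constant relative to $(\ln n)^{2\beta}$ — and the tail $h>Kn$, controlled by monotonicity of $(\ln h)^{2\beta}h^{-2s}$. Once this split is in place, the optimisation of $g$ and the arithmetic leading to the constant $\big(\tfrac{2s}{2s+1}\big)^{s}$ are routine.
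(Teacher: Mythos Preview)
Your argument is correct and follows the same overall route as the paper: reduce via Theorem~\ref{thm:bw} to the asymptotics of $S_h=\sum_{k\le h}\sigma_k^{-2}$, establish $S_h\sim\frac{C^{-2}}{2s+1}h^{2s+1}(\ln h)^{-2\beta}$, and then optimise. The differences are organisational rather than substantive. Where the paper proves the partial-sum asymptotics by an explicit integral comparison culminating in the auxiliary Lemma~\ref{lem:auxi-1} (dominated convergence for $\int_{a/n}^1 y^{2s}(\ln n/\ln(yn))^{2\beta}\,\mathrm{d}y$), you invoke regular variation (Karamata), which packages the same computation. For the supremum in part~(ii), the paper locates the maximising $h$ via a derivative analysis of $h\mapsto\frac{h-n+1}{h^{2s+1}}(\ln h)^{2\beta}$, showing that the unique critical point lies in an interval of the form $[(1+\tfrac{1}{2s})(n-1)-1,\,(1+\tfrac{1}{s})(n-1)+1]$ and then using $(\ln h/\ln n)^{2\beta}\to 1$ uniformly there; your compact-window-plus-tail split ($h\le Kn$ versus $h>Kn$, the tail controlled by eventual monotonicity of $h^{-2s}(\ln h)^{2\beta}$) achieves the same end without having to pin down the maximiser precisely. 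Your version is arguably cleaner; the paper's is more self-contained. One small point worth noting: your parenthetical treatment of the degenerate case $C=0$ is an actual improvement, since the paper's proof divides by $C-\varepsilon$ and hence tacitly assumes $C>0$.
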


 \begin{proof} {\it Step 1.} In this proof we use the following facts which are not difficult to verify. Let $(a_n)_{n\in \N}$, $(b_n)_{n\in \N}$ be sequences of positive numbers and $a_n \sim b_n$.  
 \begin{itemize}
 	\item If  $(a_n)_{n\in \N}$ is bounded, then
 	\begin{equation}\label{eq-fact1}
 		\sup_{h\geq n} a_h \sim \sup_{h\geq n}b_h .
 	\end{equation}
 \item If $\lim_{n\to \infty}a_n=\infty$ then for  $k_1, k_2\in \N$ it holds
 \begin{equation}\label{eq-fact2}
 	\sum_{k=k_1}^n a_k \sim  \sum_{k=k_2}^n b_k .
 \end{equation}
 \end{itemize}	
{\it Step 2.} Proof of (i).	From Theorem \ref{thm:bw} we have
	\begin{equation*} 
		\begin{aligned}
			v_n^{-2} := 	v_n\big(id: \mathcal{A}_{\omega}(\T^d) \to L_2(\T^d)\big)^{-2}=  \sum_{k=1}^n\sigma_k^{-2} .
		\end{aligned}
	\end{equation*}
By Lemma \ref{sing} it follows  $\sigma_k = s_k\big(id: F_{\omega}(\T^d) \to L_2(\T^d)\big)$. 
	Assumption \eqref{eq-assumption} indicates that 
	\begin{equation*}
\sigma_n^{-1} \sim  C^{-1}n^{s} (\ln n)^{-\beta}
	\end{equation*}
	which by \eqref{eq-fact2} implies
	\begin{equation*}
		v_n^{-2} 
		\sim
		 C^{-2} \sum_{k=2}^n k^{2s} (\ln k)^{-2\beta}\,. 
	\end{equation*} 
	Observe that  $f(t)=t^{2s}(\ln t)^{-2\beta}$ is an increasing function if  $t\geq t_0>0$ for some  $t_0= t_0 (s,\beta) $.
Then we have
	\begin{equation*}
		\begin{aligned}
			\sum_{k=2}^n \frac{k^{2s}}{  (\ln k)^{2\beta}}
			&
			\sim  \int_{2}^{n} \frac{ t^{2s}}{ (\ln t)^{2\beta}} \dd t
 \overset{t=yn}{=} \frac{n^{2s+1}}{(\ln n)^{2\beta}}  \int_{\frac{2}{n}}^1 y^{2s}\bigg(\frac{\ln n}{\ln(y n)}\bigg)^{2\beta}\dd y\,.
		\end{aligned}
	\end{equation*}
From this and  Lemma \ref{lem:auxi-1} below we derive 
	\begin{equation}\label{eq-fact3}
\frac{	v_n^{-2} }{n^{2s+1}(\ln n)^{-2\beta}} \sim C^{-2} 
\int_{\frac{2}{n}}^1 y^{2s}\bigg(\frac{\ln n}{\ln(y n)}\bigg)^{2\beta}\dd y \sim  \frac{C^{-2}}{2s+1}. 
	\end{equation}
	This proves (i).\\
	{\it Step 3.} Proof of (ii).
By using Theorem \ref{thm:bw},  \eqref{eq-fact1} and  \eqref{eq-fact3} we obtain
	\begin{equation*} 
		\begin{aligned}
			u_n^2
			& :=	 u_n\big(id: \mathcal{A}_{\omega}(\T^d) \to L_2(\T^d)\big)^2
		= 
			\sup_{h\geq n} \Bigg(\frac{h-n+1}{\sum_{k=1}^h\sigma_k^{-2}}\Bigg) \\
			&
			=\sup_{h\geq n} (h-n+1)v_h^2 \sim C^2(2s+1)\sup_{h\geq n }  \frac{(h-n)(\ln h)^{2\beta}}{h^{2s+1}}.
		\end{aligned}
	\end{equation*}
It follows that
\begin{equation}\label{eq-k01}
\frac{u_n^2}{n^{-2s}(\ln n)^{2\beta}} \sim C^2(2s+1) \sup_{h\geq n }  \frac{(h-n)(\ln h)^{2\beta}}{h^{2s+1}} \cdot \frac{n^{2s}}{(\ln n)^{2\beta}}.
\end{equation}
Considering the function 
$
	g(h) := \frac{h-n}{h^{2s+1} } (\ln h)^{2\beta}\,,   h\in [n,\infty)\,
$
we have
 $g'(h)\leq 0$ is equivalent to
	\begin{equation*} 
	f(h):=	\big[-2sh+n(2s+1)\big]\ln h  + \big(h-n
	\big)2\beta  \leq 0.
	\end{equation*} 
	It is easily seen that
$  
	f\big( n+\frac{n}{s} \big)     <0\, 
$
	for $n\geq n_0$ depending on $s$ and $\beta$. Then $g(h)$ attains it supremum in $\big[n,n+\frac{n}{s}\big]$. Moreover, the function $g_1(h):= \frac{h-n}{h^{2s+1}}$, $h \in [n,\infty)$, attains its maximum at 
$h= n+\frac{n}{2s}$. 	Consequently, we find 
	\begin{equation} \label{eq-k02}
		\begin{aligned}
\sup_{h\geq n }  \frac{(h-n)(\ln h)^{2\beta}}{h^{2s+1}} \cdot \frac{n^{2s}}{(\ln n)^{2\beta}}
			&	\leq   
			\frac{1 }{2s\big(1+\frac{1}{2s}\big)^{2s+1}} \bigg(\frac{\ln(n+\frac{n}{s})}{\ln n}\bigg)^{2\beta}\,.
		\end{aligned}
	\end{equation} 
Choosing $h=  n+\lfloor\frac{n}{2s}\rfloor $, where  $\lfloor a \rfloor$ denotes the greatest integer not larger than $a$,   we conclude  
	\begin{equation}\label{eq-k03} 
	\begin{aligned}
		\sup_{h\geq n }  \frac{(h-n)(\ln h)^{2\beta}}{h^{2s+1}} \cdot \frac{n^{2s}}{(\ln n)^{2\beta}}
		&	\geq   
		\frac{\lfloor \frac{n}{2s}\rfloor \, n^{2s}}{\big(n+\lfloor\frac{n}{2s}\rfloor\big)^{2s+1}} \bigg(\frac{\ln(n+\lfloor\frac{n}{2s}\rfloor)}{\ln n}\bigg)^{2\beta}\sim 	\frac{1 }{2s\big(1+\frac{1}{2s}\big)^{2s+1}}\,.
	\end{aligned}
\end{equation} 
From \eqref{eq-k01}, \eqref{eq-k02}, and \eqref{eq-k03} the assertion follows.
\end{proof}

\begin{lemma}\label{lem:auxi-1} Let $s>0$, $\beta\geq 0$, and $a > 1$. Then we have
	\begin{equation*}
		\lim\limits_{n\to \infty} \, \int_{\frac{a}{n}}^1 y^{s}\bigg(\frac{\ln n}{\ln(yn)}\bigg)^{\beta}\dd y =\frac{1}{s+1}.
	\end{equation*}
\end{lemma}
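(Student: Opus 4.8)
The plan is to split the integral at $y=n^{-1/2}$. Note that $a/n<n^{-1/2}$ as soon as $n>a^{2}$, so for all large $n$ we may write
\[
\int_{a/n}^1 y^{s}\Big(\frac{\ln n}{\ln(yn)}\Big)^{\beta}\dd y
= \int_{a/n}^{n^{-1/2}} y^{s}\Big(\frac{\ln n}{\ln(yn)}\Big)^{\beta}\dd y
+ \int_{n^{-1/2}}^{1} y^{s}\Big(\frac{\ln n}{\ln(yn)}\Big)^{\beta}\dd y .
\]
I expect the first (inner) piece to be negligible and the second (outer) piece to carry the full limit $\frac{1}{s+1}$.

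For the outer piece I would use dominated convergence on the fixed space $((0,1],\dd y)$, applied to the integrand extended by $0$ to $(0,n^{-1/2})$. Writing $\ln(yn)=\ln n+\ln y$, for every fixed $y\in(0,1]$ one has $\frac{\ln n}{\ln(yn)}=\frac{\ln n}{\ln n+\ln y}\to 1$ as $n\to\infty$, so the extended integrand converges pointwise to $y^{s}$ on $(0,1]$. Moreover, on $[n^{-1/2},1]$ and for $n\ge 2$ we have $\tfrac12\ln n=\ln(n^{1/2})\le\ln(yn)\le\ln n$, whence $1\le\big(\frac{\ln n}{\ln(yn)}\big)^{\beta}\le 2^{\beta}$ because $\beta\ge 0$; thus the integrand is dominated by $2^{\beta}y^{s}\in L^1(0,1)$ (here $s>0$). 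Dominated convergence then yields $\int_{n^{-1/2}}^{1} y^{s}\big(\frac{\ln n}{\ln(yn)}\big)^{\beta}\dd y \to \int_0^1 y^s\,\dd y=\frac{1}{s+1}$.

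For the inner piece I would only need crude bounds. On $[a/n,n^{-1/2}]$ we have $y^{s}\le n^{-s/2}$, while $yn\ge a>1$ gives $\ln(yn)\ge\ln a>0$ and hence $\big(\frac{\ln n}{\ln(yn)}\big)^{\beta}\le\big(\frac{\ln n}{\ln a}\big)^{\beta}$, and the interval has length at most $n^{-1/2}$. Multiplying the three bounds,
\[
0\le \int_{a/n}^{n^{-1/2}} y^{s}\Big(\frac{\ln n}{\ln(yn)}\Big)^{\beta}\dd y \le \frac{(\ln n)^{\beta}}{(\ln a)^{\beta}}\, n^{-(s+1)/2}\xrightarrow[n\to\infty]{}0 ,
\]
since any fixed power of $\ln n$ is dominated by any positive power of $n$. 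Adding the two pieces gives the claim; for $\beta=0$ it reduces to the elementary $\int_{a/n}^1 y^{s}\,\dd y\to\frac{1}{s+1}$, and the hypothesis $a>1$ is used precisely to keep $\ln(yn)\ge\ln a>0$ throughout, which is what makes the integrand well defined and the factor $\frac{\ln n}{\ln(yn)}$ bounded on the inner range. The only point requiring a little care is the choice of split exponent: $\tfrac12$ works, but any exponent in $(0,1)$ would do, as long as it simultaneously keeps $\frac{\ln n}{\ln(yn)}$ bounded on the outer piece and leaves the inner estimate decaying faster than the $(\ln n)^{\beta}$ growth; I do not anticipate a genuine obstacle beyond this.
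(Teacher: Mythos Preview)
Your proof is correct. Both you and the paper ultimately invoke dominated convergence, but the mechanics differ. The paper treats the whole integrand $f_n(y)=y^s\big(\frac{\ln n}{\ln(yn)}\big)^\beta\chi_{[a/n,1]}$ at once and shows it is uniformly bounded on $(0,1)$ by a constant $C_{a,s,\beta}$; to get this they compute $f_n'$, locate the (at most one) interior critical point $y=e^{\beta/s}/n$, and compare the three candidate values $f_n(a/n)$, $f_n(1)$, $f_n(e^{\beta/s}/n)$. Your splitting at $y=n^{-1/2}$ sidesteps that calculus entirely: on the outer piece the ratio $\frac{\ln n}{\ln(yn)}$ is automatically in $[1,2]$, giving the integrable majorant $2^\beta y^s$, while the inner piece is killed by the crude product bound $n^{-s/2}\cdot(\ln n/\ln a)^\beta\cdot n^{-1/2}$. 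Your argument is slightly more elementary and makes the role of the hypothesis $a>1$ very transparent; the paper's argument is more compact (one DCT, no split) and yields a sharper qualitative statement, namely that the $f_n$ are uniformly bounded in $L^\infty(0,1)$.
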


\begin{proof} 
	We consider the sequence of functions 
	\[
	f_n(y) = y^{s}\, \bigg(\frac{\ln n}{\ln(yn)}\bigg)^{\beta} \Chi_{[a/n,1]}\,, \qquad y \in (0,1)\, ,\ n>a,
	\]
where $\Chi_{[a/n,1]}$ is the characteristic function of $[a/n,1]$.	It is clear that this sequence converges pointwise to $f(y)=y^s$ on $(0,1)$.
	Now we turn to the existence of a common majorant.
	Since case $\beta =0$ is obvious, we concentrate on $\beta >0$.
	The  derivative of $f_n$ on $(a/n,1)$ is given by
	\[
	f_n'(y) = y^{s-1}\, \bigg(\frac{\ln n}{\ln(yn)}\bigg)^{\beta} \bigg[s-\frac{\beta}{\ln (ny)}\bigg]\, .
	\]
	This function has at most one sign change in $(a/n,1)$.
	Hence, the maximal value of $f_n$ with respect to the interval $[a/n,1]$ is attained either at $a/n$, $1$ or
	$e^{\beta/s}/n$.
	This implies 
	\[
	\max_{a/n \le y \le 1} \, |f_n(y)|= \max \bigg\{\Big(\frac an\Big)^s \Big(\frac{\ln n}{\ln a} \Big)^\beta, 1, 
	\frac{s}{\beta} \, e^{\beta}\,  \frac{\ln^\beta n}{n^s}\bigg\} \, .
	\]
	Summarizing we found that there exists a constant $C_{a,s,\beta}>0 $ such that
$
	\sup_{y\in (0,1)} |f_n(y)|  \leq C_{a,s,\beta}  
$
	holds. Hence, the desired result follows from  Lebesgue's dominated convergence theorem.
\end{proof}


\subsection{Approximation and Kolmogorov numbers of 
$id: ~ \mathcal{A}_\omega(\T^d) \to L_\infty(\T^d)$}


The Hilbert space $F_\omega (\T^d)$ is continuously embedded into $\mathcal{A}(\T^d)$ if and only if 
$\sum_{\bk \in \Z^d} \, \omega (\bk)^{-2}$ is finite, see \cite{CKS16}.
Under this restriction
it has been proved in  \cite{CKS16}  that the approximation numbers of embeddings of the 
Hilbert spaces $F_\omega (\T^d)$ have the following property:
\begin{equation}\label{ws-61}
a_n\big(id: F_\omega(\T^d) \to L_\infty(\T^d)\big) = 
a_n\big(id: F_\omega(\T^d) \to \mathcal{A}(\T^d)\big)\, , \qquad n \in \N\, .
\end{equation}
In this section we would like to understand whether a similar property is true for 
the  approximation numbers of embeddings of the weighted Wiener algebras 
$\mathcal{A}_\omega (\T^d)$.
As a  preparation for later considerations  we mention  the following partial result.

 \begin{lemma}\label{thm:ad-infty} 
 Let $\omega = (\omega(\bk))_{\bk\in \Z^d}$ be a weight satisfying 
 \eqref{ws-07}. Then we have
\begin{equation*} 
\begin{aligned}
\sup_{h\geq n} \Bigg(\frac{h-n+1}{\sum_{k=1}^h\sigma_k^{-2}}\Bigg)^{1/2}	\leq 
u_n\big(id: \mathcal{A}_\omega(\T^d) \to L_\infty(\T^d)\big)
\leq  \sigma_n \,,\qquad n\in \N.  
 \end{aligned}
\end{equation*}
\end{lemma}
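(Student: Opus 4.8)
The plan is to squeeze $L_\infty(\T^d)$ between $\mathcal{A}(\T^d)$ and $L_2(\T^d)$ and then read off both inequalities from the ideal property \eqref{ws-04} together with the two results already established for these endpoint spaces, namely Theorem \ref{satz1}(i) and Theorem \ref{thm:bw}(ii). Two elementary norm estimates do all the work: $\|id:\mathcal{A}(\T^d)\to L_\infty(\T^d)\|\le 1$, because $|f(\bx)|=\big|\sum_{\bk\in\Z^d}\hat f(\bk)\,e^{{\rm i}\bk\bx}\big|\le\sum_{\bk\in\Z^d}|\hat f(\bk)|$ for every $\bx\in\T^d$; and $\|id:L_\infty(\T^d)\to L_2(\T^d)\|\le 1$, since $(2\pi)^{-d}\dd\bx$ is a probability measure on $\T^d$.

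For the upper bound I would write $id:\mathcal{A}_\omega(\T^d)\to L_\infty(\T^d)$ as the composition of $id:\mathcal{A}_\omega(\T^d)\to\mathcal{A}(\T^d)$ followed by $id:\mathcal{A}(\T^d)\to L_\infty(\T^d)$. By the ideal property \eqref{ws-04} applied to this factorization,
\[
u_n\big(id:\mathcal{A}_\omega(\T^d)\to L_\infty(\T^d)\big)\le\|id:\mathcal{A}(\T^d)\to L_\infty(\T^d)\|\cdot u_n\big(id:\mathcal{A}_\omega(\T^d)\to\mathcal{A}(\T^d)\big)\le\sigma_n\,,
\]
where the last equality is Theorem \ref{satz1}(i); note that it covers Kolmogorov numbers as well as approximation numbers, so both choices $u_n\in\{a_n,d_n\}$ are handled simultaneously. (Alternatively one may check this directly with the finite-rank operator $S_\Lambda$ of Theorem \ref{satz1}(ii) and the pointwise bound $\|g|L_\infty(\T^d)\|\le\|g|\mathcal{A}(\T^d)\|$.)

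For the lower bound I would instead factor $id:\mathcal{A}_\omega(\T^d)\to L_2(\T^d)$ through $L_\infty(\T^d)$, i.e.\ as $id:\mathcal{A}_\omega(\T^d)\to L_\infty(\T^d)$ followed by $id:L_\infty(\T^d)\to L_2(\T^d)$. The ideal property \eqref{ws-04} now gives
\[
u_n\big(id:\mathcal{A}_\omega(\T^d)\to L_2(\T^d)\big)\le\|id:L_\infty(\T^d)\to L_2(\T^d)\|\cdot u_n\big(id:\mathcal{A}_\omega(\T^d)\to L_\infty(\T^d)\big)\le u_n\big(id:\mathcal{A}_\omega(\T^d)\to L_\infty(\T^d)\big)\,,
\]
and by Theorem \ref{thm:bw}(ii) the left-hand side equals $\sup_{h\ge n}\big((h-n+1)/\sum_{k=1}^h\sigma_k^{-2}\big)^{1/2}$, which is exactly the claimed lower bound. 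Combining the two chains yields the asserted double inequality.

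There is no genuine obstacle here: the argument is pure bookkeeping with the ideal property. The only point meriting care is the asymmetry of the two factorizations — the Wiener algebra $\mathcal{A}(\T^d)$ sits \emph{above} $L_\infty(\T^d)$ (smaller norm), whereas $L_2(\T^d)$ sits \emph{below} it — which is precisely what forces $u_n\big(id:\mathcal{A}_\omega(\T^d)\to L_\infty(\T^d)\big)$ to lie between the $L_2$-value from below and $\sigma_n$ from above. A secondary point worth a line of justification is that Theorem \ref{satz1}(i) indeed delivers the value $\sigma_n$ for Kolmogorov numbers and not only for approximation numbers, which is what makes the uniform treatment of $u_n\in\{a_n,d_n\}$ legitimate.
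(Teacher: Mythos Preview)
Your proposal is correct and follows essentially the same route as the paper: both arguments use the chain $\mathcal{A}_\omega(\T^d)\hookrightarrow\mathcal{A}(\T^d)\hookrightarrow L_\infty(\T^d)\hookrightarrow L_2(\T^d)$ together with the ideal property \eqref{ws-04}, invoking Theorem~\ref{satz1}(i) for the upper bound and Theorem~\ref{thm:bw}(ii) for the lower bound. Your added justifications for the two norm bounds $\|id:\mathcal{A}(\T^d)\to L_\infty(\T^d)\|\le 1$ and $\|id:L_\infty(\T^d)\to L_2(\T^d)\|\le 1$ are welcome but not strictly needed, as the paper treats these as understood.
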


\begin{proof} 
We consider the following chain of embeddings
\begin{equation*}
\mathcal{A}_\omega(\T^d) \hookrightarrow \mathcal{A}(\T^d) \hookrightarrow L_\infty(\T^d) \hookrightarrow L_2(\T^d).
\end{equation*}
Then the lower bound follows from  Theorem \ref{thm:bw} and 
\begin{equation*} 
\begin{aligned} 
u_n\big(id: \mathcal{A}_\omega(\T^d) \to L_2(\T^d)\big)
	&
	\leq
	\big\|id: L_\infty(\T^d) \to  L_2(\T^d)  \big\|\cdot u_n\big(id: \mathcal{A}_\omega(\T^d) \to L_\infty(\T^d)\big)
	\\
	& 
	= 
	u_n\big(id: \mathcal{A}_\omega(\T^d) \to L_\infty(\T^d)\big)\,.
 \end{aligned}
\end{equation*}
For the upper bound we have
\begin{equation*} 
\begin{aligned}
	u_n\big(id: \mathcal{A}_\omega(\T^d) \to L_\infty(\T^d)\big)
	&
	\leq
	\big\|id: \mathcal{A}(\T^d) \to L_\infty(\T^d)  \big\|\cdot u_n\big(id: \mathcal{A}_\omega(\T^d) \to \mathcal{A}(\T^d)\big)
	\\
	& 
	= 
u_n\big(id: \mathcal{A}_\omega(\T^d) \to \mathcal{A}(\T^d)\big)\,.
 \end{aligned}
\end{equation*}
Finally we use \eqref{ws-08} to get the desired result.
\end{proof}

\begin{remark}
 \rm
In some special situations the quantities 
$
\sup_{h\geq n} \Big(\frac{h-n+1}{\sum_{k=1}^h\sigma_k^{-2}}\Big)^{1/2}$ and $\sigma_n
$
will be of the same order. 
In such a situation we will not be able to show the existence of an asymptotic constant. However, we will get information about 
the correct  order of the decay of the $(u_n)_{n\in \N}$.
\end{remark}

All the results we obtained so far have the disadvantage that the sequence $(\sigma_n )_{n\in \N}$ shows up.
Below we shall discuss a  special family of weights $\omega_{s,r}$ and the corresponding spaces $\mathcal{A}_{\omega_{s,r}} (\T^d)$ 
and $F_{\omega_{s,r}} (\T^d)$. Then we will be able to remove this dependence. 
The nonincreasing rearrangement $(\sigma_n)_{n\in \N}$  of our weight will be replaced by explicit 
quantities in $n,s,d$ and $r$.


\section{Function spaces of dominating mixed smoothness}
\label{ex1}


In this section we shall deal with the family of weights 
\begin{align*} 
\omega_{s,r}(\bk) & :=  \prod_{i=1}^d\big(1+|k_i|^r\big)^{s/r}\, , \qquad 0 < r < \infty\, , \\
\omega_{s,\infty}(\bk) & :=  \prod_{i=1}^d \max (1,|k_i|)^{s} , 
\end{align*}
$\bk \in \Z^d$.
Here the parameter $s$ satisfies  $0<s<\infty$. 
We shall use the notation  $\mathcal{A}_{\mix}^{s,r}(\T^d):=\mathcal{A}_{\omega_{s,r}}(\T^d)$
and ${H}_{\mix}^{s,r}(\T^d):={F}_{\omega_{s,r}}(\T^d)$, respectively.
In both cases, for different $r$, we obtain the same sets of functions.
A change of the parameter $r$ leads to a change of the quasinorm only.

The classes ${H}_{\mix}^{s,r}(\T^d)$ are well-known in approximation theory. 
They are called periodic Sobolev spaces of dominating mixed smoothness.
The space ${H}_{\mix}^{s,r}(\T^d)$  is just a tensor product space of the  univariate 
Sobolev spaces  ${H}^{s,r}(\T)$, i.e., 
\[
{H}_{\mix}^{s,r}(\T^d) = {H}^{s,r}(\T) \otimes \ldots \otimes {H}^{s,r}(\T)
\]
(to be understood as the iterated tensor product of $d$ Hilbert spaces).
Let $s = m\in \N$. We define  the space $H^m_{\mix}(\T^d)$ to be the collection of all functions 
$f \in L_2 (\T^d)$ such that all distributional derivatives $D^\balpha f$ with $|\balpha|_\infty\leq m$
belong to $L_2 (\T^d)$ equipped with the norm
\begin{equation*}
\big\|\, f\, |H^m_{\mix}(\T^d)\big\|  := \Bigg(\sum_{\balpha \in \N_0^d,\, |\balpha|_{\infty} \leq m} \, \big\|\, D^{\balpha}f\, |L_2(\T^d)\big\|^2\Bigg)^{1/2}\, .
\end{equation*}  
Then $H^m_{\mix}(\T^d) = H^{m,r}_{\mix}(\T^d)$ for all $r$ in the sense of equivalent quasinorms.
If $m=1$, then we have 
$
\|\, \cdot \, |H^{1,2}_{\mix}(\T^d)\|= \|\, \cdot\, |H^1_{\mix}(\T^d)\|\, .
$
If $m \ge 2$, then the norm $\|\, \cdot\, |H^m_{\mix}(\T^d)\|$ itself does not belong to the family of norms
$\|\, \cdot\, |H^{m,r}_{\mix}(\T^d)\|$, $0 < r \le \infty$.
But the choice $r=2m$ leads to the following standard norm  
\begin{equation*}
\big\|\, f\, |H^{m,2m}_{\mix}(\T^d)\big\|= \Bigg(\sum_{\balpha \in \{0,m\}^d} \big\| \, D^{\balpha}f\, |L_2
(\T^d)\big\|^2\Bigg)^{1/2} \, ,
\end{equation*}
see \cite{KSU15}.
For a recent survey on the behavior of $s$-numbers of embeddings 
of these spaces into $L_p (\T^d)$ we refer to  \cite{DTU18B}. 

\begin{remark}
There is a different class of weights which would be of interest. For $0 < s < \infty$ and $0< r\le \infty$,
let 
\begin{align*} 
	\widetilde{\omega}_{s,r}(\bk) & :=  \bigg(1+ \sum_{i=1}^d |k_i|^r\bigg)^{s/r}\, , \qquad 0 < r < \infty\, , \\
	\widetilde{\omega}_{s,\infty}(\bk) & := \max \big(1,|k_1|, \ldots \, , |k_d|\big)^{s}
\end{align*}
$\bk \in \Z^d$.
The corresponding Sobolev spaces $H^{s,r}(\T^d):={F}_{\tilde{\omega}_{s,r}}(\T^d)$ are the standard periodic Sobolev spaces of fractional smoothness $s$,
the classes $\mathcal{A}^{s,r} (\T^d):=\mathcal{A}_{\tilde{\omega}_{s,r}}(\T^d)$ would be the natural counterparts.
The investigation of the behavior of the $s$-numbers for the embeddings $\mathcal{A}^{s,r} (\T^d) \hookrightarrow \mathcal{A}(\T^d)$
and $\mathcal{A}^{s,r} (\T^d) \hookrightarrow L_p(\T^d)$ will be postponed.
\end{remark}


\subsection{$s$-Numbers of $id: ~ \mathcal{A}^{s,r}_{\mix}(\T^d) \to \mathcal{A}(\T^d)$}


Here we are in a convenient situation. 
Based on \cite{KSU15,KSU20} and \eqref{ws-12b} all hard work is already done. 
First we recall a further result, see \cite{KSU15}.

\begin{prop}\label{alt}
	Let $0<s<\infty$ and $0<r\leq \infty$. Let $s_n \in \{x_n,b_n,d_n,a_n\}$, $n \in \N$.
	Then it holds
	\begin{equation*} 
		\lim\limits_{n\to \infty} \frac{s_n\big(id: H^{s,r}_{\mix}(\T^d)\to L_2(\T^d)\big)}{n^{-s}(\ln n)^{s(d-1)}}= 
		\bigg( \frac{2^d}{(d-1)!}\bigg)^s\,.
	\end{equation*}
\end{prop}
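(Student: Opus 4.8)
The plan is to reduce the whole statement to the asymptotic analysis of a single scalar sequence. By Lemma~\ref{sing} we are in the Hilbert--to--Hilbert situation, so for every $s_n\in\{x_n,b_n,d_n,a_n\}$ one has
\[
s_n\big(id:H^{s,r}_{\mix}(\T^d)\to L_2(\T^d)\big)=\sigma_n,
\]
where $(\sigma_n)_{n\in\N}$ is the non-increasing rearrangement of $\big(\omega_{s,r}(\bk)^{-1}\big)_{\bk\in\Z^d}$. Hence it suffices to prove
$\sigma_n\sim\big(2^d/(d-1)!\big)^s\,n^{-s}(\ln n)^{s(d-1)}$.
First I would turn the rearrangement into a lattice-point counting problem: set $N(t):=\big|\{\bk\in\Z^d:\omega_{s,r}(\bk)\le t\}\big|$ and let $t_n$ be the $n$-th smallest element of the multiset $\{\omega_{s,r}(\bk):\bk\in\Z^d\}$, so that $\sigma_n=1/t_n$ and $t_n$ is essentially pinned down by $N(t_n)\approx n$. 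Everything then hinges on the precise asymptotics of $N(t)$.

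For $N(t)$ I would exploit the tensor-product structure $\omega_{s,r}(\bk)=\prod_{i=1}^d(1+|k_i|^r)^{s/r}$, which makes the associated Dirichlet series factorize:
\[
\sum_{\bk\in\Z^d}\omega_{s,r}(\bk)^{-\lambda}
=\Big(\sum_{k\in\Z}(1+|k|^r)^{-s\lambda/r}\Big)^{\!d}=:\zeta_{s,r}(\lambda)^d .
\]
The one-dimensional factor satisfies $\zeta_{s,r}(\lambda)=2\zeta_{\mathrm{Riemann}}(s\lambda)+(\text{holomorphic near }\lambda=1/s)$ because $(1+k^r)^{-s\lambda/r}=k^{-s\lambda}\big(1+O(k^{-r})\big)$, so $\zeta_{s,r}$ has a simple pole at $\lambda=1/s$ with residue $2/s$, and consequently $\sum_{\bk}\omega_{s,r}(\bk)^{-\lambda}$ has a pole of order $d$ at $\lambda=1/s$ with leading coefficient $(2/s)^d$. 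An Ikehara--Delange type Tauberian theorem then gives
\[
N(t)\sim\frac{(2/s)^d}{(1/s)\,(d-1)!}\,t^{1/s}(\ln t)^{d-1}
=\frac{2^d}{s^{d-1}(d-1)!}\,t^{1/s}(\ln t)^{d-1},\qquad t\to\infty .
\]
Alternatively the same asymptotics follow by induction on $d$: fixing $k_d$, the remaining condition is a $(d-1)$-dimensional count with rescaled threshold $t\,(1+|k_d|^r)^{-s/r}$, and Abel summation over $k_d$ produces the extra factor $\ln t/(d-1)$. Note that $r$ enters only through lower-order terms, which is exactly why the limit is independent of $r$.

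Finally I would invert the relation. From $n\sim N(t_n)\sim\frac{2^d}{s^{d-1}(d-1)!}t_n^{1/s}(\ln t_n)^{d-1}$ one bootstraps $\ln t_n\sim s\ln n$, hence $(\ln t_n)^{d-1}\sim s^{d-1}(\ln n)^{d-1}$, and substituting back yields
\[
t_n^{1/s}\sim\frac{(d-1)!}{2^d}\,\frac{n}{(\ln n)^{d-1}},
\qquad\text{so}\qquad
\sigma_n=\frac1{t_n}\sim\Big(\frac{2^d}{(d-1)!}\Big)^{\!s}\frac{(\ln n)^{s(d-1)}}{n^s},
\]
which is the assertion. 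The main obstacle is the quantitative control in the counting step: one needs not merely the order $t^{1/s}(\ln t)^{d-1}$ but the exact constant $2^d/(s^{d-1}(d-1)!)$, together with an error bound strong enough to survive the logarithmic inversion above. This is precisely the delicate part that is carried out in \cite{KSU15}, from which the statement is recalled here.
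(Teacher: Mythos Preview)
Your proposal is correct and matches the paper's treatment: the paper does not prove this proposition but merely recalls it from \cite{KSU15} for approximation numbers and then invokes Lemma~\ref{sing} (the Hilbert-to-Hilbert coincidence of all four $s$-numbers) to cover $x_n,b_n,d_n$ as well---exactly your first reduction. Your additional sketch of the counting asymptotics for $N(t)$ goes beyond what the paper does here; note that the argument in \cite{KSU15} proceeds by direct combinatorial estimates (close to your induction-on-$d$ alternative) rather than via the Dirichlet-series/Tauberian route, but both lead to the same constant and your inversion step is handled correctly.
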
 
In \cite{KSU15} this is formulated for $a_n$ only. 
But we are in a Hilbert space situation, see Lemma \ref{sing}. 
Let us mention that in the particular case $r=1$ this proposition follows also from a two sided estimate given in  \cite[Theorem 4.6]{ChD16},
see also the older arXiv-version \cite[Theorem 6.5]{ChD13}.

\begin{cor}\label{cor-A-A}
 Let $0 < s < \infty $, $0 <  r \le \infty$ and $s_n \in \{x_n, b_n, d_n, a_n\}$. 
Then
\begin{equation*}
 \lim_{n \to \infty} \, \frac{s_{n} \big(id: \, \mathcal{A}^{s,r}_{\mix} (\T^d) \to \mathcal{A} (\T^d)\big)}{n^{-s}(\ln n)^{(d-1)s} \, } =
\bigg(\frac{2^{d}}{(d-1)!}\bigg)^s \, .
\end{equation*}
\end{cor}

\begin{proof}
 The proof is a direct consequence of a corresponding result  for 
$a_{n} (id: \, {H}^{s,r}_{\mix} (\T^d) \to {L}_2 (\T^d))$ in Proposition \ref{alt}, and Corollary \ref{cor1}. 
\end{proof}

The asymptotic behavior of the $s$-numbers $s_{n} \big(id: \, \mathcal{A}^{s,r}_{\mix} (\T^d) \to \mathcal{A} (\T^d)\big)$
is quite different from the behavior for small $n$, say $n \le 2^d$.
This range is called the preasymptotic region. In case $d$ is large this is the only region where such a result could be 
of practical relevance (e.g., as a benchmark).

\begin{cor}
Let $d\ge 3$, $0<s<\infty$ and  $1\le r < \infty$.
Let $s_n \in \{x_n, b_n, d_n, a_n\}$.
For all $n \ge 2$ it holds
\beqq
s_n\big(id:~\mathcal{A}^{{s},{r}}_{\mix}(\T^d) \to \mathcal{A}(\T^d)\big) \leq
\bigg(\frac{C(d)}{n}\bigg)^{\frac{s}{r\, (1+\log_2(d-1))}}\,,
\eeqq
where $C(d)$ is defined as
\beqq
 C(d):= \bigg[ 1+ \frac{1}{d-1} \Big(1 + \frac{2}{\log_2 (d-1)}\Big)\bigg]^{d-1}\, .
\eeqq
\end{cor}

\begin{proof}
Again the proof follows from  Corollary \ref{cor1} and a corresponding result for 
$a_{n} (id: \, {H}^{s,r}_{\mix} (\T^d) \to {L}_2 (\T^d))$, see  \cite{KSU15,KSU20}. 
\end{proof}

Obviously  $\lim_{d\to \infty} \, C(d)= e$.
That makes clear that in the preasymptotic range there is an essential change in the approximation properties.
The asymptotically optimal rate 
$n^{-s} \, (\ln n)^{s(d-1)}$ is replaced by ${n}^{-\frac{s}{r\, (1+\log_2(d-1))}}$.
The larger $d$, the smaller the convergence rate is.
In addition there is a tremendous influence of the special norm chosen, i.e., of the parameter $r$.
If $r\to \infty$, then the rate is getting worse.
Here is the result in the limiting situation $r= \infty$.

\begin{cor}\label{unendl}
Let $0<s<\infty$ and $s_n \in \{x_n, b_n, d_n, a_n\}$.
Then
\[
 s_n \big(id:\, \mathcal{A}^{{s},{\infty}}_{\mix}(\T^d)\to \mathcal{A}(\T^d)\big) = 1\, , \qquad n=1, 2, \ldots , 3^d\, .
\]
\end{cor}

\begin{proof}
The proof follows from  Corollary \ref{cor1} and a corresponding result for 
$a_{n} (id: \, {H}^{s,\infty}_{\mix} (\T^d) \to {L}_2 (\T^d))$, see  \cite[Theorem 4.3]{KSU20}. 
\end{proof}


\subsection{$s$-Numbers of $id: ~ \mathcal{A}^{s,r}_{\mix}(\T^d) \to L_p(\T^d)$, $2\leq p\leq \infty$}


Theorem  \ref{thm-general} and Proposition \ref{alt} and  will be used to derive the following. 

 \begin{theorem}\label{thm:a-mix} Let $0<s<\infty$ and $0<r\leq \infty$. 
\\
{\rm (i)} In case of Bernstein or Weyl numbers we have 
\begin{equation*}
\lim\limits_{n\to \infty} \frac{ v_n\big(id: \mathcal{A}_{\mix}^{s,r}(\T^d) \to L_2(\T^d)\big)}{n^{-s-\frac{1}{2}}(\ln n)^{s(d-1)}}= 
\sqrt{ 2s+1}\bigg( \frac{2^d}{(d-1)!}\bigg)^s \,.
\end{equation*}
{\rm (ii)}
For approximation or Kolmogorov numbers it holds
\begin{equation*}\label{ws-18}
\lim\limits_{n\to \infty} \frac{ u_n\big(id: \mathcal{A}_{\mix}^{s,r}(\T^d) \to L_2(\T^d)\big)}{n^{-s}(\ln n)^{s(d-1)}}
= \bigg(\frac{2s}{2s+1}\bigg)^s\bigg( \frac{2^d}{(d-1)!}\bigg)^s.
\end{equation*}
 \end{theorem}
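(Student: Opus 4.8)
The plan is to apply Theorem \ref{thm-general} directly. The key observation is that Theorem \ref{thm:a-mix} is exactly the special case of Theorem \ref{thm-general} in which the weight is $\omega = \omega_{s,r}$, so that $F_\omega(\T^d) = H_{\mix}^{s,r}(\T^d)$ and $\mathcal{A}_\omega(\T^d) = \mathcal{A}_{\mix}^{s,r}(\T^d)$. Thus the first step is to verify that the hypothesis \eqref{eq-assumption} of Theorem \ref{thm-general} is met. Since $\omega_{s,r}(\bk) = \prod_{i=1}^d (1+|k_i|^r)^{s/r}$ (or the $r=\infty$ analogue) satisfies $\lim_{|\bk|\to\infty}\omega_{s,r}(\bk) = \infty$, condition \eqref{ws-07} holds, and Proposition \ref{alt} tells us precisely that
\[
\lim_{n\to\infty} \frac{s_n\big(id: H_{\mix}^{s,r}(\T^d) \to L_2(\T^d)\big)}{n^{-s}(\ln n)^{s(d-1)}} = \bigg(\frac{2^d}{(d-1)!}\bigg)^s
\]
for $s_n \in \{x_n, b_n, d_n, a_n\}$. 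Hence \eqref{eq-assumption} holds with $\beta = s(d-1)$ and $C = \big(2^d/(d-1)!\big)^s$.

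The second step is simply to substitute these values of $\beta$ and $C$ into the two conclusions of Theorem \ref{thm-general}. Part (i) of that theorem gives, for $v_n \in \{x_n, b_n\}$,
\[
\lim_{n\to\infty} \frac{v_n\big(id: \mathcal{A}_{\mix}^{s,r}(\T^d) \to L_2(\T^d)\big)}{n^{-s-1/2}(\ln n)^{s(d-1)}} = \sqrt{2s+1}\,\bigg(\frac{2^d}{(d-1)!}\bigg)^s,
\]
which is assertion (i). Part (ii) of Theorem \ref{thm-general}, with $u_n \in \{a_n, d_n\}$, gives
\[
\lim_{n\to\infty} \frac{u_n\big(id: \mathcal{A}_{\mix}^{s,r}(\T^d) \to L_2(\T^d)\big)}{n^{-s}(\ln n)^{s(d-1)}} = \bigg(\frac{2s}{2s+1}\bigg)^s \bigg(\frac{2^d}{(d-1)!}\bigg)^s,
\]
which is \eqref{ws-18}. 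This completes the argument.

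In fact there is essentially no obstacle here: all the genuine work has been front-loaded into Theorem \ref{thm-general} (the analysis of the sums $\sum_{k\le n}\sigma_k^{-2}$ and $\sup_{h\ge n}(h-n+1)/\sum_{k\le h}\sigma_k^{-2}$, together with the integral asymptotics of Lemma \ref{lem:auxi-1}) and into Proposition \ref{alt} (the deep fact from \cite{KSU15,KSU20} computing the asymptotic constant for the Hilbert-space embedding). The only point requiring a word of care is making sure the identification $F_{\omega_{s,r}}(\T^d) = H_{\mix}^{s,r}(\T^d)$ together with Lemma \ref{sing} legitimately lets us read Proposition \ref{alt} as a statement about $s_n(id: F_\omega(\T^d)\to L_2(\T^d))$; since we are in the Hilbert-space case all four $s$-numbers coincide, so no subtlety arises there either. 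I would therefore present the proof as a two-line deduction, citing Proposition \ref{alt} for the value of $C$ and Theorem \ref{thm-general} for the passage from $H_{\mix}^{s,r}$ to $\mathcal{A}_{\mix}^{s,r}$.
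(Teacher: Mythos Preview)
Your proposal is correct and takes exactly the same approach as the paper: the paper itself simply states that Theorem~\ref{thm-general} and Proposition~\ref{alt} yield Theorem~\ref{thm:a-mix}, without writing out any further details. Your verification that $\omega_{s,r}$ satisfies \eqref{ws-07} and your identification of $\beta = s(d-1)$, $C = \big(2^d/(d-1)!\big)^s$ make the deduction explicit, but the underlying argument is identical.
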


Now we turn to  the approximation and Kolmogorov numbers of  $id: ~\mathcal{A}^{s,r}_{\mix}(\T^d) \to L_p(\T^d)$ with $2<p\leq \infty$.

 \begin{theorem} \label{lunendlich}
Let $0<s<\infty$ and $0<r\leq \infty$. Then for any $\varepsilon >0$ there exists some  $n_0\in \N$ such that
\begin{equation} \label{ws-41}
\begin{aligned}
		\bigg(\frac{2s}{2s+1}\bigg)^s\bigg( \frac{2^d}{(d-1)!}\bigg)^s -\varepsilon  \leq  \frac{ u_n\big(id: 
			\mathcal{A}^{s,r}_{\mix}(\T^d) \to L_p(\T^d)\big)}{n^{-s}(\ln n)^{s(d-1)} }  
		\leq 
	 \bigg( \frac{2^d}{(d-1)!}\bigg)^s +\varepsilon\,
\end{aligned}
\end{equation}
holds for all $n\geq n_0$.
\end{theorem}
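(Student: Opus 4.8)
The plan is to sandwich $u_n\big(id:\mathcal{A}^{s,r}_{\mix}(\T^d)\to L_p(\T^d)\big)$ between $u_n\big(id:\mathcal{A}^{s,r}_{\mix}(\T^d)\to L_2(\T^d)\big)$ and $u_n\big(id:\mathcal{A}^{s,r}_{\mix}(\T^d)\to L_\infty(\T^d)\big)$ and to observe that, after renormalizing by $n^{-s}(\ln n)^{s(d-1)}$, these two bounding quantities already have the asymptotic range announced in \eqref{ws-41} by Theorem~\ref{thm:a-mix}(ii) and Lemma~\ref{thm:ad-infty}. First I would reduce (ii) to (i): since $\|\T^d\|=1$ and the measure is a probability measure, the embeddings $L_\infty(\T^d)\hookrightarrow L_p(\T^d)\hookrightarrow L_2(\T^d)$ all have norm $1$ for $2\le p\le\infty$, so by the ideal property \eqref{ws-04},
\[
u_n\big(id:\mathcal{A}^{s,r}_{\mix}(\T^d)\to L_2(\T^d)\big)
\le
u_n\big(id:\mathcal{A}^{s,r}_{\mix}(\T^d)\to L_p(\T^d)\big)
\le
u_n\big(id:\mathcal{A}^{s,r}_{\mix}(\T^d)\to L_\infty(\T^d)\big)\,,
\]
for every $2<p<\infty$ and every $n\in\N$. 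Hence it suffices to prove the two outer inequalities in \eqref{ws-41}, i.e.\ to establish (i); the case $2<p<\infty$ then follows verbatim.

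For the lower bound in (i) I would invoke Theorem~\ref{thm:a-mix}(ii) directly: $u_n\big(id:\mathcal{A}^{s,r}_{\mix}(\T^d)\to L_\infty(\T^d)\big)\ge u_n\big(id:\mathcal{A}^{s,r}_{\mix}(\T^d)\to L_2(\T^d)\big)$, and the right-hand side divided by $n^{-s}(\ln n)^{s(d-1)}$ tends to $\big(\tfrac{2s}{2s+1}\big)^s\big(\tfrac{2^d}{(d-1)!}\big)^s$, so for any $\varepsilon>0$ it exceeds $\big(\tfrac{2s}{2s+1}\big)^s\big(\tfrac{2^d}{(d-1)!}\big)^s-\varepsilon$ once $n$ is large. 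For the upper bound I would use Lemma~\ref{thm:ad-infty}, which gives $u_n\big(id:\mathcal{A}_\omega(\T^d)\to L_\infty(\T^d)\big)\le\sigma_n$ for a general admissible weight; specialized to $\omega=\omega_{s,r}$, we have $\sigma_n=s_n\big(id:H^{s,r}_{\mix}(\T^d)\to L_2(\T^d)\big)$ by Lemma~\ref{sing}, and Proposition~\ref{alt} says $\sigma_n/\big(n^{-s}(\ln n)^{s(d-1)}\big)\to\big(\tfrac{2^d}{(d-1)!}\big)^s$. Thus for large $n$ the upper quotient is at most $\big(\tfrac{2^d}{(d-1)!}\big)^s+\varepsilon$. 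Choosing $n_0$ to be the larger of the two thresholds finishes (i), and therefore (ii).

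In truth there is no genuine obstacle here: the statement is a soft consequence of the already-established $L_2$ and $L_\infty$ endpoint results together with interpolation-type monotonicity of $s$-numbers in the target space. The only point that deserves a word of care is the normalization of the measure on $\T^d$, which is what makes the embedding norms between the $L_p$ scales equal to $1$ (rather than merely bounded); this is exactly why the paper chose the probability measure $(2\pi)^{-d}\dd\bx$, and it is what keeps the constants in \eqref{ws-41} clean and free of any $p$-dependence. I would state this normalization observation explicitly before applying \eqref{ws-04}.
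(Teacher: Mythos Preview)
Your proposal is correct and follows essentially the same approach as the paper: sandwich by the ideal property between the $L_2$ endpoint (handled by Theorem~\ref{thm:a-mix}(ii)) and the $L_\infty$ endpoint (handled by Lemma~\ref{thm:ad-infty} together with Lemma~\ref{sing} and Proposition~\ref{alt}). The only cosmetic difference is that for the upper bound in (ii) the paper routes through $\mathcal{A}(\T^d)$ and \eqref{ws-28} rather than through $L_\infty(\T^d)$ and part~(i), but since $\mathcal{A}(\T^d)\hookrightarrow L_\infty(\T^d)$ with norm~$1$ and both paths land on $\sigma_n$, the arguments are equivalent.
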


\begin{proof} For $2<p\leq \infty$, in view of the chain of embeddings
	$$\mathcal{A}^{s,r}_{\mix}(\T^d)\hookrightarrow \mathcal{A}(\T^d)\hookrightarrow L_p(\T^d)\hookrightarrow L_2(\T^d)$$
	we conclude
	\be\label{ws-51}
	u_n\big(id: \mathcal{A}_{\mix}^{s,r}(\T^d) \to L_2(\T^d)\big)\le 
	u_n\big(id: \mathcal{A}_{\mix}^{s,r}(\T^d) \to L_p(\T^d)\big)
	\le 
	u_n\big(id: \mathcal{A}_{\mix}^{s,r}(\T^d) \to \mathcal{A}(\T^d)\big)\, .
	\ee
Employing Theorem \ref{thm:a-mix} (ii) yields 
\[
\frac{u_n\big(id: ~\mathcal{A}^{s,r}_{\mix}(\T^d) \to L_2(\T^d))}{n^{-s}(\ln n)^{s(d-1)} }   \xrightarrow[n \to \infty]{} 
\bigg(\frac{2s}{2s+1}\bigg)^s\bigg( \frac{2^d}{(d-1)!}\bigg)^s\, .
\]
On the other hand we know
\[
 \frac{\sigma_n}{n^{-s}(\ln n)^{s(d-1)}} = \frac{u_n\big(id: \mathcal{A}^{s,r}_{\mix}(\T^d)\to \mathcal{A}(\T^d)\big)}{n^{-s}(\ln n)^{s(d-1)}}
 \xrightarrow[n \to \infty]{} \bigg( \frac{2^d}{(d-1)!}\bigg)^s\,,
\]
see Corollary \ref{cor-A-A}. Now combining these two facts with \eqref{ws-51} we obtain the claim.
\end{proof}

\begin{remark}
\rm
(i) One of the main results in \cite{CKS16} reads as follows.
Let $d\in \mathbb{N}$ , $s>1/2$ and $0<r\le \infty$. Then 
\[
\lim_{n \rightarrow \infty}\frac{  a_n(I_d : H^{s,r} _{\rm{mix}}(\mathbb{T}^d) \to 
L_\infty (\mathbb{T}^d))}{n^{-s+1/2}(\ln n)^{(d-1)s}} = 
\frac{1}{\sqrt{2s-1}}\bigg(\frac{2^d}{(d-1)!}\bigg)^s \, .
\]
Comparing this behavior with the result  obtained in Theorem \ref{lunendlich}
we notice an essential difference. Indeed, we have that for any $\varepsilon >0 $  there exists some $n_0 $
such that 

\[
\frac{1}{\sqrt{n}}   \le \frac{1}{\sqrt{n}}  \, 
 \frac{a_n(I_d : H^{s,r} _{\rm{mix}}(\mathbb{T}^d))\to L_\infty (\T^d))}{a_n(I_d : \mathcal{A}^{s,r} _{\rm{mix}}(\mathbb{T}^d) \to L_\infty (\T^d))}
\le  \bigg(1 + \frac{1}{2s}\bigg)^s \frac{1}{\sqrt{2s-1}} + \varepsilon
\]
holds for all $n\ge n_0$. For the  first inequality we used \eqref{ws-40} and \eqref{ws-04}.
\\
(ii) The left-hand side of inequality \eqref{ws-41} can be simplified. 
Elementary analysis yields that  for all $\varepsilon >0$ there exists some  $n_0\in \N$ such that
\begin{equation*} 
\begin{aligned}
		\frac{1}{\sqrt{e}}\, \bigg( \frac{2^d}{(d-1)!}\bigg)^s -\varepsilon  \leq  \frac{ u_n\big(id: 
			\mathcal{A}^{s,r}_{\mix}(\T^d) \to L_\infty(\T^d)\big)}{n^{-s}(\ln n)^{s(d-1)} }  
		\leq 
	 \bigg( \frac{2^d}{(d-1)!}\bigg)^s +\varepsilon\,
\end{aligned}
\end{equation*}
is true  for all $n\geq n_0$.
\\
{\rm (iii)} Above we have mentioned the identity
\[
a_n\big(id: F_\omega(\T^d) \to L_\infty(\T^d)\big) = 
a_n\big(id: F_\omega(\T^d) \to \mathcal{A}(\T^d)\big)\, , \qquad n \in \N\, .
\]
see \eqref{ws-61}.
Up to now it is not clear whether the counterpart 
\[
a_n\big(id: \mathcal{A}^{s,r}_{\mix}(\T^d) \to L_\infty(\T^d)\big) = 
a_n\big(id: \mathcal{A}^{s,r}_{\mix}(\T^d) \to \mathcal{A}(\T^d)\big)
\]
holds.
But at least we know that for any $\varepsilon >0$ there exists some  $n_0\in \N$ such that
\begin{equation*} 
\begin{aligned}
		1  \leq  \frac{ a_n\big(id: 
			\mathcal{A}^{s,r}_{\mix}(\T^d) \to \mathcal{A}(\T^d)\big)}{a_n\big(id: 
			\mathcal{A}^{s,r}_{\mix}(\T^d) \to L_\infty(\T^d)\big)}  
		\leq \bigg(\frac{2s+1}{2s}\bigg)^s  +\varepsilon< \sqrt{e}+\varepsilon\,
\end{aligned}
\end{equation*}
holds for all $n\geq n_0$. This follows from 
\[
a_n\big(id: \mathcal{A}^{s,r}_{\mix}(\T^d) \to L_\infty(\T^d)\big)\le 
 a_n\big(id: \mathcal{A}^{s,r}_{\mix}(\T^d) \to \mathcal{A}(\T^d)\big)
\]
and Theorem \ref{lunendlich}(i).
\end{remark}


\subsection{$s$-Numbers of $id: \mathring{S}^m_{\infty}W(\T^d)\to L_2(\T^d)$}


Denote $\mathring{\Z}^d := \{\bk \in \Z^d: k_j\not=0, j=1,\ldots,d\}$. Let $m \in \N$. In this section we deal with the weight
$
\omega_m =(\omega_m(\bk))_{\bk\in \mathring{\Z}^d}$, 
where $\omega_m(\bk) := \prod_{j=1}^d |k_j|^{m}$, $\bk\in \mathring{\Z}^d$. 
We consider  the functions $f\in L_1(\T^d)$ having mean zero in every direction $j=1,\ldots,d$, i.e.,
\begin{equation}\label{eq-k00}
\int_{\T} f(\bx)\dd x_j=0,\qquad j=1,\ldots,d.
\end{equation}
We define  $\mathring{\mathcal{A}}^{m}_{\mix}(\T^d)$ as the collection of all functions $f$ in $C(\T^d)$ satisfying \eqref{eq-k00}
and
\[
\|f|\mathring{\mathcal{A}}^{m}_{\mix}(\T^d)\|:= \sum_{\bk\in \mathring{\Z}^d}  \omega_m(\bk) |\hat{f}(\bk)|< \infty \, .
\]
Let $F_{m}(\bx)$ be the multidimensional analogue of the Bernoulli kernel
\begin{equation*} \label{eq-Bernoulli}
F_{m}(\bx) = 2^d\, \sum_{\bk \in \N^d} \prod_{j=1}^d k_j^{-m} \cos\Big(k_jx_j-\frac{m\pi}{2}\Big).
\end{equation*}
We define $\mathring{H}^{m}_{\mix}(\T^d)$ the set of all functions $f$
which can be represented  in the following form
\begin{equation}\label{eq-representation}
f(\bx) = F_{m}(\bx) * \varphi(\bx)=(2\pi)^{-d}\int_{\T^d} \varphi(\by)F_{m}(\bx-\by)\dd \by ,
\end{equation}
where $\varphi \in L_2(\T^d)$. The norm of $f\in \mathring{H}^{m}_{\mix}(\T^d)$ is given by
$$
\|f|\mathring{H}^{m}_{\mix}(\T^d)\| := \|\varphi |L_2(\T^d)\|.
$$
Similarly, we define   $\mathring{S}^m_{\infty}W(\T^d)$ as the set  of
all functions $f\in L_\infty(\T^d)$ which can be represented in the form \eqref{eq-representation} with $\varphi\in L_\infty(\T^d)$. The norm of $f\in \mathring{S}^m_{\infty}W(\T^d)$ is defined by
\begin{equation*}
	\|f|\mathring{S}^m_{\infty}W(\T^d)\|: =  \|\varphi|L_\infty(\T^d)\|.
\end{equation*}
Because of  $\varphi =D^{(m,\ldots,m)} f$ we conclude 
\begin{equation*}
	\| f|\mathring{H}^{m}_{\mix}(\T^d)\| =\|D^{(m,\ldots,m)} f|L_2(\T^d)\| = \Bigg(\sum_{\bk\in \mathring{\Z}^d} \omega_m(\bk)^2|\hat{f}(\bk)|^2\Bigg)^{1/2} 
\end{equation*}
and
\begin{equation*}
	\|f|\mathring{S}^m_{\infty}W(\T^d)\|= \|D^{(m,\ldots,m)} f|L_\infty(\T^d)\|\,.
\end{equation*}

\begin{lemma}\label{einb1}
 Let $m \in \N$. 
Then we have the chain of continuous embeddings 
\[
\mathring{\mathcal{A}}^{m}_{\mix}(\T^d)  \hookrightarrow \mathring{S}^m_{\infty}W(\T^d)\hookrightarrow \mathring{H}^{m}_{\mix}(\T^d)
\, .
\]
The norm of these embedding operators is always $1$.
\end{lemma}

\begin{proof} Let $f \in \mathring{\mathcal{A}}^{m}_{\mix}(\T^d) $. Then $D^{(m,\ldots,m)} f$ is a continuous function 
which coincides with its Fourier series given by 
$\sum_{\bk \in \mathring{\Z}^d} \hat{f} (\bk)\,  e^{{\rm i}\bk \bx} \big(\prod_{\ell=1}^d ({\rm i}k_\ell)^{m} \big)$.
Furthermore, we have the obvious estimate
\begin{align*}
\Bigg|\sum_{\bk \in \mathring{\Z}^d} \hat{f} (\bk)\,  e^{{\rm i}\bk \bx} \bigg(\prod_{\ell=1}^d ({\rm i}k_\ell)^{m} \bigg) \Bigg|
& \le   \sum_{\bk \in \mathring{\Z}^d} |\hat{f} (\bk)|\, \prod_{\ell=1}^d |k_\ell|^{m} 
=  \| \, f\, |\mathring{\mathcal{A}}^{m}_{\mix}(\T^d) \|\, . 
\end{align*}
This proves the first embedding. The second embedding is obvious.
\end{proof}

We have an analogue of Proposition \ref{alt}.

\begin{prop}\label{prop-o}
	Let $m \in \N$ and $s_n \in \{x_n,b_n,d_n,a_n\}$, $n \in \N$.
	Then it holds
\begin{equation*} 
\lim\limits_{n\to \infty} \frac{s_n\big(id: \mathring{H}^{m}_{\mix}(\T^d)\to L_2(\T^d)\big)}{n^{-m}(\ln n)^{m(d-1)}}= 
\bigg( \frac{2^d}{(d-1)!}\bigg)^m\,.
	\end{equation*}
\end{prop}

\begin{proof} Recall that the sequence $s_n\big(id: \mathring{H}^{m}_{\mix}(\T^d)\to L_2(\T^d)\big)$ is the nonincreasing rearrangement of 
$(1/\omega_m(\bk))_{\bk\in \mathring{\Z}^d}$, see Lemma \ref{sing}. 
We put
\[
 c(r,d):= \bigg|\bigg\{\bk \in \mathring{\Z}^d: ~\prod_{j=1}^d |k_j| 
 \leq r \bigg\}\bigg|, \quad r \in \N \, .
\]
Using the estimates given in  \cite[Theorems 3.4 and 3.5]{ChD16} (with $a=1$) we find
\be\label{ch}
\frac{2^d}{(d-1)!}\,  \frac{r(\ln r)^d}{\ln r+d} < c(r,d) < \frac{2^d}{(d-1)!} \, \frac{r(\ln r +d\ln 2)^d}{\ln r+d\ln 2 +d-1}
\ee
for $r>r*>1$.
Now we may proceed as in the proof of \cite[Theorem 4.3]{KSU15}. By definition of $c(r,d)$ we know
\[
 s_n := s_n \big(id: \mathring{H}^{m}_{\mix}(\T^d)\to L_2(\T^d)\big) = r^{-m}
 \qquad \mbox{if}\quad c(r-1,d) <n \le c(r,d)\, , \qquad n \ge 2\, .
\]
Clearly, $\lim_{r\to \infty} c(r, d) = \infty$;
moreover, the sequence $n\, (\ln n)^{-(d-1)}$ is increasing for 
$n > e^{d-1}$. Hence we obtain
for sufficiently large $r \in  \N$ the two-sided inequality
\[
\frac{ c(r - 1, d)}{r\, (\ln c(r-1,d))^{d-1}}
\le s_n^{1/m} \, \frac{n}{(\ln n)^{d-1}}\le 
\frac{c(r, d)}{r\, (\ln c(r, d))^{d-1}}\, .
\]
Applying \eqref{ch} the claim follows. 
\end{proof}

Our main result in this section reads as follows.

\begin{theorem}  \label{satz2}
Let $m \in \N$. 
\\
{\rm (i)} Let $(\sigma_k)_{k\in \N}$ be of the  non-increasing rearrangement of the sequence $(1/\omega_m(\bk))_{\bk \in \mathring{Z}^d}$. Then the following identity takes place
\begin{equation*} \label{eq:bw-cmix}
v_n\big(id: \mathring{S}^m_{\infty}W(\T^d) \to L_2(\T^d)\big) 
= \Bigg(\sum_{k=1}^n\sigma_k^{-2}\Bigg)^{-1/2}\,.
\end{equation*}
{\rm (ii)} In case of Weyl and Bernstein numbers the asymptotic constant is given by
\begin{equation*}
\lim\limits_{n\to \infty} \, \frac{ v_n\big(id: \mathring{S}^m_{\infty}W(\T^d) \to L_2(\T^d)\big)}{n^{-m-\frac{1}{2}}(\ln n)^{m(d-1)}}=
\sqrt{ 2m+1}\, \bigg( \frac{2^d}{(d-1)!}\bigg)^m \, .
\end{equation*}
{\rm (iii)} For all $\varepsilon >0$  there exists some $n_0\in \N$ such that
\begin{equation} \label{eq:ad-cmix}
\begin{aligned} 
\bigg(\frac{2m}{2m+1}\bigg)^m\bigg( \frac{2^d}{(d-1)!}\bigg)^m -\varepsilon  
\leq  \frac{ u_n\big(id: \mathring{S}^m_{\infty}W(\T^d) \to L_2(\T^d)\big)}{n^{-m}(\ln n)^{m(d-1)} }  
\leq    \bigg( \frac{2^d}{(d-1)!}\bigg)^m + \varepsilon\,
\end{aligned} 
\end{equation} 
holds for all $n\geq n_0$.
\end{theorem}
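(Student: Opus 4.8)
The three parts build on each other, so I would take them in order. For part (i), the plan is to invoke Theorem~\ref{thm:op95} with the identifications indicated right before the statement, namely $E=\T^d$, $X_E=C^m_{\mix}(\T^d)$, $H_1=H^{m,r}_{\mix}(\T^d)$, $H=L_2(\T^d)$ and $T$ the identity. This yields the upper bound $b_n\big(id:C^m_{\mix}(\T^d)\to L_2(\T^d)\big)\le\big(\sum_{k=1}^n\sigma_k^{-2}\big)^{-1/2}$. For the matching lower bound I would use the embedding $\mathcal{A}^{m,r}_{\mix}(\T^d)\hookrightarrow C^m_{\mix}(\T^d)$ with norm $1$ from Lemma~\ref{einb1}(i) together with the ideal property~\eqref{ws-04}, which gives
\[
v_n\big(id:\mathcal{A}^{m,r}_{\mix}(\T^d)\to L_2(\T^d)\big)\le v_n\big(id:C^m_{\mix}(\T^d)\to L_2(\T^d)\big),
\]
and then Theorem~\ref{thm:bw}(i) identifies the left-hand side as exactly $\big(\sum_{k=1}^n\sigma_k^{-2}\big)^{-1/2}$. (For Weyl numbers the same sandwiching works, using $x_n\le b_n$ from~\eqref{weylb} in the Hilbert-space target together with the fact that the Weyl-number lower bound already matches.) This pins down~\eqref{eq:bw-cmix}.

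For part (ii), I would simply feed the quantity $\big(\sum_{k=1}^n\sigma_k^{-2}\big)^{-1/2}$ from part (i) into the asymptotic analysis already carried out in Theorem~\ref{thm-general}(i): by Lemma~\ref{sing} and Proposition~\ref{alt} the rearranged weights satisfy $\sigma_k\sim\big(\frac{2^d}{(d-1)!}\big)^m k^{-m}(\ln k)^{m(d-1)}$, i.e.\ assumption~\eqref{eq-assumption} holds with $s=m$, $\beta=m(d-1)$ and $C=\big(\frac{2^d}{(d-1)!}\big)^m$. Theorem~\ref{thm-general}(i) then gives precisely the claimed limit $\sqrt{2m+1}\,C$. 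No new computation is needed here — it is a direct citation.

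For part (iii), the lower bound uses the monotonicity $b_n\le d_n\le a_n$ for compact operators together with the chain $\mathcal{A}^{m,r}_{\mix}(\T^d)\hookrightarrow C^m_{\mix}(\T^d)$ (norm $1$): combining the ideal property with Theorem~\ref{thm:a-mix}(ii) (for $\mathcal{A}^{m,r}_{\mix}$) shows $u_n\big(id:C^m_{\mix}(\T^d)\to L_2(\T^d)\big)\ge u_n\big(id:\mathcal{A}^{m,r}_{\mix}(\T^d)\to L_2(\T^d)\big)$, whose asymptotic constant is $\big(\frac{2m}{2m+1}\big)^m\big(\frac{2^d}{(d-1)!}\big)^m$; hence the left inequality in~\eqref{eq:ad-cmix} holds for $n$ large. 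For the upper bound I would route through the embedding $S^m_{\infty}W(\T^d)\hookrightarrow H^{m,2m}_{\mix}(\T^d)$ of norm $2^{d/2}$ from Lemma~\ref{einb1}(ii), combined with $C^m_{\mix}(\T^d)\hookrightarrow S^m_{\infty}W(\T^d)$ (norm $1$) from Lemma~\ref{einb1}(i); this gives $u_n\big(id:C^m_{\mix}(\T^d)\to L_2(\T^d)\big)\le 2^{d/2}\,u_n\big(id:H^{m,2m}_{\mix}(\T^d)\to L_2(\T^d)\big)$, and Proposition~\ref{alt} (with $r=2m$, using that all the listed $s$-numbers coincide in the Hilbert case, Lemma~\ref{sing}) supplies the asymptotic constant $\big(\frac{2^d}{(d-1)!}\big)^m$ for the right factor. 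Multiplying through yields the right inequality in~\eqref{eq:ad-cmix}. The only mild subtlety — and the step I would be most careful about — is the gap between the constants $\big(\frac{2m}{2m+1}\big)^m$ and $2^{d/2}$ on the two sides: this reflects that approximation/Kolmogorov numbers of $C^m_{\mix}\to L_2$ are genuinely sandwiched rather than determined, so~\eqref{eq:ad-cmix} is an $\varepsilon$-inequality for large $n$ and not a limit, and the proof must phrase it that way.
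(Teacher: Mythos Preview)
Your proposal is correct and follows essentially the same approach as the paper. The only cosmetic differences are that in part~(ii) the paper cites Theorem~\ref{thm:a-mix} directly (which already packages the asymptotic via Theorem~\ref{thm-general} and Proposition~\ref{alt}), and in part~(iii) the paper writes the upper-bound embedding as $C^m_{\mix}(\T^d)\hookrightarrow H^{m,2m}_{\mix}(\T^d)$ in one step rather than passing explicitly through $S^m_\infty W(\T^d)$; both amount to the same thing.
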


\begin{proof}
{\it Step 1.} Proof of (i). From the embedding
$\mathring{\mathcal{A}}^{m}_{\mix}(\T^d) \hookrightarrow  \mathring{S}^m_{\infty}W(\T^d)$,  with operator norm $1$, see 
Lemma \ref{einb1}, we conclude by means of \eqref{ws-04} that
\begin{equation*}
\begin{aligned} 
v_n\big(id: \mathring{\mathcal{A}}^{m}_{\mix}(\T^d) \to L_2(\T^d)\big)
	&
	 \leq
	  \|id: \mathring{\mathcal{A}}^{m}_{\mix}(\T^d) \to \mathring{S}^m_{\infty}W(\T^d) \|\cdot v_n\big(id: \mathring{S}^m_{\infty}W(\T^d) \to L_2(\T^d)\big)
	  \\
	& 
	= 
	v_n\big(id: \mathring{S}^m_{\infty}W(\T^d) \to L_2(\T^d)\big)\,.
\end{aligned}
\end{equation*}
Theorem \ref{thm:bw} yields  the lower estimate. 
\\
Since the target space is a Hilbert space we know that  
$$x_n\big(id: \mathring{S}^m_{\infty}W(\T^d) \to L_2(\T^d)\big)
\leq 
b_n\big(id: \mathring{S}^m_{\infty}W(\T^d) \to L_2(\T^d)\big),$$ see \eqref{weylb}. Consider the  operators 
$A: \mathring{S}^m_{\infty}W(\T^d) \to L_\infty(\T^d)$ and $K: L_\infty(\T^d)\to L_2(\T^d)  $
defined by $A(f):=\varphi$ and $K(\psi):=F_m*\psi$, respectively.
Here  $\varphi$ is as in \eqref{eq-representation}. 
Then $id = K \circ A$ and we obtain 
\begin{equation}\label{eq-k04}
\begin{aligned}
b_n\big(id: \mathring{S}^m_{\infty}W(\T^d) \to L_2(\T^d)\big)
& \leq \|A:\mathring{S}^m_{\infty}W(\T^d) \to L_\infty(\T^d)\| \cdot b_n(K:L_\infty(\T^d) \to L_2(\T^d)  ) 
\\
&=  b_n(K:L_\infty(\T^d) \to L_2(\T^d)  ) .
\end{aligned}
\end{equation}
Let $m$ be an even number.
Then $F_m$ can be written as
\begin{equation*}
	F_{m}(\bx) = 2^d\sum_{\bk \in \N^d} \prod_{j=1}^d k_j^{-m} (-1)^{\frac{m}{2}} \frac{e^{{\rm i} k_jx_j}+e^{- {\rm i} k_jx_j}}{2} = (-1)^{\frac{dm}{2}} \sum_{\bk \in \mathring{\Z}^d} \omega_m(\bk)^{-1}e^{{\rm i} \bk \bx}.
\end{equation*}
This implies that  $K$ is an operator as given  in \cite[Example 4]{Par95}. Therefore we may apply Theorem 9 in 
\cite{Par95} and  obtain 
$$
b_n\big(K:L_\infty(\T^d) \to L_2(\T^d)\big) = \Bigg(\sum_{k=1}^n\sigma_k^{-2}\Bigg)^{-1/2}.
$$
This identity combined with  \eqref{eq-k04} proves the upper bound when $m$ is even. The case $m$  being an odd number can be  carried
out similarly. 
\\
{\em Step 2.} Proof of (ii).
Part (ii) becomes a direct consequence of (i) in combination with  Theorem \ref{thm-general}(i) and Proposition \ref{prop-o}.
\\
{\it Step 3.} Proof of (iii).
For the  lower bound in \eqref{eq:ad-cmix} we may use the same argument as in Step 1. Then we have
$$
u_n\big(id: \mathring{\mathcal{A}}^{m}_{\mix}(\T^d) \to L_2(\T^d)\big)
\leq 
u_n\big(id: \mathring{S}^m_{\infty}W(\T^d) \to L_2(\T^d)\big)\,.
$$
Concerning the upper bound we shall employ the embeddings $\mathring{S}^m_{\infty}W(\T^d) \hookrightarrow  \mathring{H}^{m}_{\mix}(\T^d)\hookrightarrow L_2(\T^d)$.
We obtain  
\begin{equation*}
\begin{aligned} 
u_n\big(id: \mathring{S}^m_{\infty}W(\T^d) \to L_2(\T^d)\big) 
&
 \leq 
 \|id: \mathring{S}^m_{\infty}W(\T^d) \to \mathring{H}^{m}_{\mix}(\T^d)\| 
 \cdot 
 u_n\big(id: \mathring{H}^{m}_{\mix}(\T^d) \to L_2(\T^d)\big)\\
&
\leq  u_n\big(id: \mathring{H}^{m}_{\mix}(\T^d) \to L_2(\T^d)\big)\,.
\end{aligned}
\end{equation*}
In view of Theorem \ref{thm-general} and Proposition \ref{prop-o} we have finished the proof.
 \end{proof}


\section{$s$-Numbers of $id: \mathcal{A}^{s,2}_{\mix}(\T^d)\to H^1(\T^d)$}
\label{ex3}


In this section, we are interested in the asymptotic constants for $s$-numbers of the embeddings $ \mathcal{A}^{s,2}_{\mix}(\T^d)\hookrightarrow  H^1(\T^d)\big)$, $s> 1$. 
Here $H^1(\T^d)$ is the  isotropic periodic Sobolev space with smoothness $1$ equipped with the norm 
\begin{align*}
 \|\, f\, |H^1(\T^d)\|:&= \Bigg(\sum_{\bk \in \Z^d} \bigg(1+\sum_{j=1}^d |k_j|^2\bigg) |\hat{f}(\bk)|^2\Bigg)^{1/2} 
 \\
 &= 
\Bigg(\|\, f\, |L_2(\T^d)\|^2 + \sum_{j=1}^d \Big\|\, \frac{\partial f}{\partial x_j}\, \Big|L_2(\T^d)\Big\|^2\Bigg)^{1/2}
\, .
\end{align*}
Clearly, if $d\ge 2$ we have $H^1_{\mix}(\T^d)\neq H^1(\T^d)$, in fact the 
embedding $H^1_{\mix}(\T^d) \hookrightarrow  H^1(\T^d)$ is proper. As a preparation  we  shall investigate 
the asymptotic constant of $s_n\big(id: H^{s,2}_{\mix}(\T^d)\to H^1(\T^d)\big)$, $s> 1$. 
Because we are in a Hilbert space situation it will be enough to deal  with the approximation numbers.
We define a weight $\omega$ by
\begin{equation}\label{eq-omega}
\omega(\bk):= \frac{\prod_{j=1}^d(1+|k_j|^2)^{s/2}}{\big(1+\sum_{j=1}^d |k_j|^2\big)^{1/2}},\qquad \bk\in \Z^d\, .
\end{equation}
In addition we need the diagonal operator $D_\omega \xi := (\xi(\bk)/\omega (\bk))_{\bk \in \Z^d}$, $\xi = (\xi(\bk))_{\bk \in \Z^d}$,
see \eqref{ws-12}. Now we observe that 
\begin{equation}\label{eq-f-omega}
a_n\big(id: H^{s,2}_{\mix}(\T^d)\to H^1(\T^d)\big) =
 a_n\big(D_\omega : \ell_2(\Z^d)\to \ell_2(\Z^d)\big)=a_n\big(id: F_\omega(\T^d)\to L_2(\T^d)\big) ,
\end{equation}
 see Lemma \ref{sing}.
Rearranging non-increasingly the sequence $(1/\omega(\bk))_{\bk\in \Z^d}$ with the outcome denoted by $(\sigma_n)_{n\in \N}$, 
we obtain $\sigma_n = a_n\big(id: H^{s,2}_{\mix}(\T^d)\to H^1(\T^d)\big)$. 

The asymptotic order of $a_n\big(id: H^{s,2}_{\mix}(\T^d)\to H^1(\T^d)\big)$ is well-known, see \cite{DU13,BDSU16}. It holds
\begin{align*}
	C_1(s,d)n^{-s+1} \leq a_n\big(id: H^{s,2}_{\mix}(\T^d)\to H^1(\T^d)\big) \leq C_2(s,d)n^{-s+1}, \qquad n \in \N\, , 
\end{align*}
with constants $C_1(s,d)$ and $C_2(s,d)$ depending on $s,d$. Notice that there is no logarithmic factor anymore.
Several preasymptotic estimates for $a_n\big(id: H^{s,2}_{\mix}(\T^d)\to H^1(\T^d)\big)$ may be found in \cite{KSU20}. 
Our result for the asymptotic constant of $a_n\big(id: H^{s,2}_{\mix}(\T^d)\to H^1(\T^d)\big)$ reads as follows.

\begin{theorem}\label{thm-dk} Let $s >1$ and define 
\begin{equation}\label{eq-S}
S:=\sum_{k=1}^{\infty}\dfrac{1}{(k^2+1)^{\frac{s}{2(s-1)}}}.
\end{equation}
Then we have
$$\lim _{n \to +\infty} \frac{a_n\big(id: H^{s,2}_{\mix}(\T^d)\to H^1(\T^d)\big)}{n^{-s+1}}=(2d)^{s-1}(2S+1)^{(s-1)(d-1)}.$$ 
\end{theorem}
Theorem \ref{thm-dk} in case $d=1$  is almost obvious and represents an old result of Kolmogorov, see \cite{Kol36}.
However, it can be found also in \cite[Theorem.~4.14]{KSU14}. In the following we will assume that $d\geq 2$. 
For $s>1$, $r\in \N$ and $\omega(\bk)$ as in \eqref{eq-omega} we define
$$
\mathcal M(r,d):=\big\{\bk \in \mathbb Z^d: \omega(\bk)^2 \le (1+r^2)^{s-1}\big\}
$$
and $C(r,d):=|\mathcal M(r,d)|$. The following lemma will be crucial for the proof of Theorem \ref{thm-dk}. 

\begin{lemma} \label{lem-refree} Let $S$ be given in \eqref{eq-S}. Then it holds
	$$\lim \limits_{r \to +\infty} \dfrac{C(r,d)}{r}=2d(1+2S)^{d-1}.$$
\end{lemma}

\begin{proof}{\it Step 1.} First observe the elementary implication
	\begin{align} \label{dad1}
		\bk \in \mathcal M(r,d)\ \ \Rightarrow\ \ \prod_{j=1}^d(1+k_j^2) \le 1+r^2. \end{align}
	Indeed, the trivial inequality $1+\sum_{j=1}^d k_j^2 \le \prod_{j=1}^d (1+k_j^2)$ implies 	$$(1+r^2)^{s-1} \ge \omega(\bk)^2 =\dfrac{\prod_{j=1}^d (1+k_j^2)^s}{1+\sum_{j=1}^d k_j^2} \ge \prod \limits_{j=1}^d (1+k_j^2)^{s-1}.$$
We consider the sets 
	\begin{align*}
		&\mathcal M_0(r,d):=\big\{\bk \in \mathcal M(r,d): \max \limits_{j=1,\ldots,d} |k_j|<r^{3/4}\big\}\  \\
		&\mathcal M_i(r,d):=\big\{\bk \in \mathcal M(r,d): |k_i|\ge r^{3/4}\big\},\ \  i=1 ,\ldots, d.
	\end{align*}
	Clearly $$\mathcal M(r,d)=\bigcup_{i=0}^d \mathcal M_i(r,d).$$
Obviously $\mathcal M_0(r,d)$ is disjoint from all other $\mathcal M_i(r,d)$, $i=1,\ldots,d$. Assuming that there is a $\bk \in \mathcal M_i(r,d) \cap \mathcal M_j(r,d)$ for $1 \le i \neq j \le d,$ the observation \eqref{dad1} would imply the contradiction $$1+r^2 \ge (1+k_i^2)(1+k_j^2) \ge (1+r^{3/2})^2 >1+r^2.$$
	Therefore $\mathcal M_i(r,d) \cap \mathcal M_j(r,d)=\emptyset$ for $0 \le i \neq j \le d$. 
	With the notation  $C_i(r,d):=|\mathcal M_i(r,d)|$, $i=0,\ldots,d$, we obtain \begin{align}
		\label{dad2}
		C(r,d)=C_0(r,d)+\sum \limits_{i=1}^d C_i (r,d)=C_0(r,d)+dC_d(r,d).
	\end{align}
{\it Step 2.}	Estimate of $C_0(r,d)$. 
	For $\tau>1$ we consider the following family of the hyperbolic crosses
	$$\mathcal{H}_d(\tau):=\bigg \{\bk \in \mathbb Z^d: \prod_{j=1}^d(1+k_j^2)^{1/2} \le \tau \bigg \}.$$
Recall the well-known estimate for its cardinality  $|\mathcal{H}_d(\tau)|=\mathcal{O}(\tau(\ln \tau)^{d-1})$ if $\tau$ is tending to infinity, see, 
e.g., formula  (0.1) on page 130 in \cite{T93b}.
For $\bk \in \mathcal M_0(r,d)$, we have
	\begin{align*}
		\prod_{j=1}^d (1+k_j^2)^s&=\omega(\bk)^2 \bigg(1+\sum \limits_{j=1}^d k_j^2 \bigg)\le (1+r^2)^{s-1}(1+dr^{3/2})=\mathcal O(r^{2s-1/2}).	
	\end{align*}
Let $\alpha = 1- \frac1{4s}<1.$	
Hence, there is a constant $c=c(r,d,s)>0$ such that 
	$$\mathcal M_0(r,d) \subseteq \mathcal H_d(c r^\alpha)=\bigg \{\bk \in \mathbb Z^d: \prod_{j=1}^d (1+k_j^2)^{1/2} \le cr^\alpha \bigg \}.$$
Since the cardinality of the hyperbolic cross $\mathcal H_d(cr^\alpha)$ is of order $\mathcal O(r^\alpha (\ln r)^{d-1})$, we get
\begin{equation}\label{eq-c0}
	\lim \limits_{r \to +\infty} \frac{C_0(r,d)}{r}=0.
\end{equation}
{\it Step 3.} Estimate of $C_d(r,d).$ \\
{\it Substep 3.1.} Estimate from above. For $\bl=(l_1,\ldots,l_{d-1}) \in  \mathbb{ Z}^{d-1}$  denote $$p(\bl):=\prod_{j=1}^{d-1}(1+l_j^2)^{-\frac{s}{2(s-1)}}.$$
	Then we have $$\sum \limits_{\bl \in \mathbb Z^{d-1}}p(\bl)=\bigg(\sum \limits_{m \in \mathbb Z}(1+m^2)^{-\frac{s}{2(s-1)}} \bigg)^{d-1}=(1+2S)^{d-1}.$$
	Using again \eqref{dad1}, we have for any $\bk \in \mathcal M_d(r,d)$
	$$1+\sum \limits_{j=1}^{d-1} k_j^2 \le \prod_{j=1}^{d-1}(1+k_j^2) \le \dfrac{1+r^2}{1+k_d^2} \le \dfrac{1+r^2}{1+r^{3/2}} \le r^{1/2}.$$
	On one hand this implies $\bk^*:=(k_1,\ldots,k_{d-1}) \in \mathcal H_{d-1}(r^{1/4})$ and on the other hand it shows that the coordinate $k_d$ satisfies
	\begin{align*}
		k_d^{2(s-1)} \le (1+k_d^2)^{s-1} &\le \dfrac{(1+r^2)^{s-1}}{\prod_{j=1}^{d-1}(1+k_j^2)^s}\cdot\dfrac{1+k_d^2+\sum_{j=1}^{d-1}k_j^2}{1+k_d^2}\\
		& \le (1+r)^{2(s-1)}\Big(1+\dfrac{r^{1/2}}{r^{3/2}}\Big) \prod_{j=1}^{d-1}(1+k_j^2)^{-s},
	\end{align*}
	equivalently $$|k_d| \le (1+r)\Big(1+\frac{1}{r}\Big)^{\frac{1}{2(s-1)}} \prod_{j=1}^{d-1}(1+k_j^2)^{-\frac{s}{2(s-1)}}=f(r)p(\bk^*),$$	
	where $f(r):=(1+r)(1+\frac{1}{r})^{\frac{1}{2(s-1)}}$. 
	Hence we have shown the inclusion
	$$\mathcal M_d(r,d) \subseteq \mathcal B(r,d):=\big \{\bk \in \mathbb Z^d: \bk^* \in \mathcal H_{d-1}(r^{1/4}),\ |k_d| \le f(r)p(\bk^*) \big \}.
	$$
Since $|\{ m \in \mathbb Z: |m| \le R\}| \le 2R+1,$ we obtain
\begin{equation}\label{eq-cd01}
	\begin{aligned}
		\dfrac{C_d(r,d)}{r} & \le \dfrac{|\mathcal B(r,d)|}{r} \le \sum \limits_{\bk^* \in \mathcal H_{d-1}(r^{1/4})} \dfrac{2f(r)p(\bk^*)+1}{r}\\
		&\le \dfrac{2f(r)}{r} \sum \limits_{\bk^* \in \mathbb Z^{d-1}}p(\bk^*)+\dfrac{|\mathcal H_{d-1}(r^{1/4})|}{r}  \underset{r \to \infty} \longrightarrow 2(1+2S)^{d-1},
	\end{aligned}
\end{equation}
	where we have used $\lim \limits_{r \to \infty} \frac{f(r)}{r}=1$ and $|\mathcal H_{d-1}(r^{1/4})|=\mathcal O(r^{1/4}(\ln r)^{d-2})$.
\\
\noindent
{\it Substep 3.2.} Estimate from below. We define
	$$\mathcal A(r,d):=\big\{\bk \in \mathbb Z^d: \bk^* \in \mathcal H_{d-1}(r^{1/8}),\ r^{3/4} \le |k_d| \le rp(\bk^*)-1\big\}  .$$
For $\bk \in \mathcal A(r,d)$, we have
	\begin{align*}
		\omega(\bk)^2&= \frac{\prod_{j=1}^d(1+k_j^2)^{s}}{1+\sum_{j=1}^d k_j^2} \le  \frac{\prod_{j=1}^d(1+k_j^2)^{s}}{1+ k_d^2}\\
		&=(1+k_d^2)^{s-1}\prod_{j=1}^{d-1}(1+k_j^2)^s \le (1+|k_d|)^{2(s-1)}\prod_{j=1}^{d-1}(1+k_j^2)^s\\
		&\le r^{2(s-1)}p(\bk^*)^{2(s-1)}\prod_{j=1}^{d-1}(1+k_j^2)^s=r^{2(s-1)}<(1+r^2)^{s-1},
	\end{align*}
	which shows that $\bk \in \mathcal M_d(r,d)$ and hence $\mathcal A(r,d) \subseteq \mathcal M_d(r,d)$. Now using the estimate 
	$$|\{m \in \mathbb Z: R_1 \le |m| \le R_2\}| \ge 2(R_2-R_1-1)$$ for $0<R_1<R_2$ and $|\mathcal H_{d-1}(r^{1/8})|=\mathcal O(r^{1/8} (\ln r)^{d-2})$ this implies
\begin{equation}\label{eq-cd02}
	\begin{aligned}
		\dfrac{C_d(r,d)}{r} &\ge \dfrac{|\mathcal A(r,d)|}{r} \ge \sum \limits_{\bk^* \in \mathcal H_{d-1}(r^{1/8})} \dfrac{2(rp(\bk^*)-r^{3/4}-2)}{r} 	\\
		&=2 \sum \limits_{\bk^* \in \mathcal H_{d-1}(r^{1/8})} p(\bk^*)-\dfrac{2(r^{3/4}+2)}{r}|\mathcal H_{d-1}(r^{1/8})|\\
		&\underset{r \to \infty} \longrightarrow 2 \sum_{\bk^* \in \mathbb Z^{d-1}} p(\bk^*)=2(1+2S)^{d-1}.
	\end{aligned}
\end{equation}
From \eqref{eq-cd01} and \eqref{eq-cd02} we get
	$$\lim \limits_{r \to +\infty} \frac{C_d(r,d)}{r}=2(1+2S)^{d-1}.$$
This, together with  \eqref{eq-c0} and  \eqref{dad2}, implies the desired result.
\end{proof}

\begin{proof}[Proof of Theorem \ref{thm-dk}]	
Recall that $a_n\big(id: H^{s,2}_{\mix}(\T^d)\to H^1(\T^d)\big)$ is the  nonincreasing rearrangement of the sequence 
$(1/\omega(\bk))_{\bk\in \Z^d}$, see Lemma \ref{sing}.
Observe that for any $r\in \N$ we have 
\[
 (1+r^2)^{\frac{s-1}{2}} \in \{\omega (\bk):~ \bk \in \Z^d\}\, .
\]
Furthermore,
for any  $n \in \mathbb N$, $n \ge 2$, there exists some $ r \in \mathbb N$ such that 
	\be\label{eq} 
	C(r-1,d) < n \le C(r,d)
	\ee
and
	\begin{align*}
	\dfrac{1}{(r^2+1)^{\frac{s-1}{2}}} & =  
	a_{C(r,d)}\big(id: H^{s,2}_{\mix}(\T^d)\to H^1(\T^d)\big)
	\le 
	a_n\big(id: H^{s,2}_{\mix}(\T^d)\to H^1(\T^d)\big) 
	\\
	& \le  a_{C(r-1,d)}\big(id: H^{s,2}_{\mix}(\T^d)\to H^1(\T^d)\big)=
	\dfrac{1}{((r-1)^2+1)^{\frac{s-1}{2}}}\, .
	\end{align*}
	This leads to
	\begin{align*}
		\frac{C(r-1,d)^{s-1}}{(r^2+1)^{\frac{s-1}{2}}} \le \frac{a_n\big(id: H^{s,2}_{\mix}(\T^d)\to H^1(\T^d)\big)}{n^{-s+1}} \le \frac{C(r,d)^{s-1}}{((r-1)^2+1)^{\frac{s-1}{2}}} \label{dl}. 
	\end{align*}
	Now  the desired result follows from Lemma \ref{lem-refree}.
\end{proof}

Our main result in this section reads as follows.

\begin{theorem}\label{thm:a-mix-h1} Let $d\in \N$, $s>1$ and $S$ be given in \eqref{eq-S}.
\\
{\rm (i)} In case of Bernstein or Weyl numbers we have 
	\begin{equation*}
		\lim\limits_{n\to \infty} \frac{ v_n\big(id: \mathcal{A}_{\mix}^{s,2}(\T^d) \to H^1(\T^d)\big)}{n^{-s+\frac{1}{2}}}= 
		\sqrt{ 2s+1} (2d)^{s-1}(2S+1)^{(s-1)(d-1)} \,.
	\end{equation*}
{\rm (ii)} For approximation or Kolmogorov numbers it holds
	\begin{equation*} 
		\lim\limits_{n\to \infty} \frac{ u_n\big(id: \mathcal{A}_{\mix}^{s,2}(\T^d) \to H^1(\T^d)\big)}{n^{-s+1}}
		= \bigg(\frac{2s}{2s+1}\bigg)^s (2d)^{s-1}(2S+1)^{(s-1)(d-1)}.
	\end{equation*}
\end{theorem}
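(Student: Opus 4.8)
I would run, once more, the reduction to a diagonal operator between sequence spaces --- exactly as in the proofs of Theorems~\ref{satz1} and~\ref{thm:bw} --- and then invoke Theorem~\ref{thm-general} with the weight $\omega$ of~\eqref{eq-omega}. The first step is the identity
\[
s_n\big(id:\mathcal{A}^{s,2}_{\mix}(\T^d)\to H^1(\T^d)\big)=s_n\big(id:\mathcal{A}_\omega(\T^d)\to L_2(\T^d)\big)\,,\qquad n\in\N\,,\ s_n\in\{x_n,b_n,d_n,a_n\}\,.
\]
It holds because $H^1(\T^d)$ is isometrically isomorphic to $L_2(\T^d)$ via multiplication of Fourier coefficients by $\big(1+\sum_{j=1}^{d}|k_j|^2\big)^{1/2}$, and because $\omega_{s,2}(\bk)\big(1+\sum_{j}|k_j|^2\big)^{-1/2}=\omega(\bk)$ by~\eqref{eq-omega}; consequently both $id:\mathcal{A}^{s,2}_{\mix}(\T^d)\to H^1(\T^d)$ and $id:\mathcal{A}_\omega(\T^d)\to L_2(\T^d)$ factor, through the identifications $Af=(\omega_{s,2}(\bk)\hat f(\bk))_\bk$ resp.\ $Af=(\omega(\bk)\hat f(\bk))_\bk$ in the style of~\eqref{ws-12}--\eqref{ws-14}, as the single diagonal operator $D_\omega:\ell_1(\Z^d)\to\ell_2(\Z^d)$ of~\eqref{ws-13} with $\omega$ from~\eqref{eq-omega}. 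Writing down the two commuting diagrams --- one for each inequality, all operators and their inverses of norm~$1$ --- and using the ideal property~\eqref{ws-04} gives the identity, just as in the proof of Theorem~\ref{thm:bw}.

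\textbf{Verifying the hypothesis of Theorem~\ref{thm-general}.} Since $s>1$, the weight~\eqref{eq-omega} satisfies~\eqref{ws-07}: from $\prod_{j}(1+|k_j|^2)\ge\max_{j}(1+|k_j|^2)\ge d^{-1}\big(1+\sum_{j}|k_j|^2\big)$ one obtains $\omega(\bk)\ge d^{-s/2}\big(1+|\bk|_2^2\big)^{(s-1)/2}\to\infty$ as $|\bk|_2\to\infty$. By Lemma~\ref{sing} together with~\eqref{eq-f-omega}, $s_n\big(id:F_\omega(\T^d)\to L_2(\T^d)\big)=\sigma_n=a_n\big(id:H^{s,2}_{\mix}(\T^d)\to H^1(\T^d)\big)$, where $(\sigma_n)_n$ is the non-increasing rearrangement of $(1/\omega(\bk))_{\bk\in\Z^d}$; Proposition~\ref{thm-dk} then yields
\[
\lim_{n\to\infty}\frac{s_n\big(id:F_\omega(\T^d)\to L_2(\T^d)\big)}{n^{-(s-1)}}=(2d)^{s-1}(2S+1)^{(s-1)(d-1)}\,.
\]
Hence assumption~\eqref{eq-assumption} is met with the smoothness parameter of that theorem equal to $s-1>0$, with $\beta=0$, and with $C=(2d)^{s-1}(2S+1)^{(s-1)(d-1)}$.

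\textbf{Conclusion and the main obstacle.} It remains to apply Theorem~\ref{thm-general}(i) and~(ii) to the weight $\omega$ --- the smoothness parameter being $s-1$, which produces the rates $n^{-(s-1)-1/2}=n^{-s+1/2}$ for Bernstein and Weyl numbers and $n^{-(s-1)}=n^{-s+1}$ for approximation and Kolmogorov numbers --- and to combine the result with the identity of the first step; this gives the two assertions. Nothing in this argument is delicate beyond checking that the maps in the reduction diagrams are genuine isometries, which is routine and parallels the computations already carried out in Section~\ref{weite1}. The real difficulty is concentrated in Proposition~\ref{thm-dk}, whose proof (deferred to Appendix~\ref{appendix}) requires a careful asymptotic analysis of the counting function $|\{\bk\in\Z^d:\omega(\bk)\le t\}|$ for the weight~\eqref{eq-omega}: unlike $\omega_{s,r}$, this weight is not of tensor product form, so the clean product formulae available for $H^{s,r}_{\mix}(\T^d)\hookrightarrow L_2(\T^d)$ are not at one's disposal and the rearrangement $(\sigma_n)_n$ has to be controlled more directly.
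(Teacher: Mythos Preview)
Your proposal is correct and follows essentially the same route as the paper: establish the identity $s_n\big(id:\mathcal{A}^{s,2}_{\mix}(\T^d)\to H^1(\T^d)\big)=s_n\big(id:\mathcal{A}_\omega(\T^d)\to L_2(\T^d)\big)$ via a lifting argument (the paper does this with isometries $A:\mathcal{A}^{s,2}_{\mix}\to\mathcal{A}_\omega$ and $B:L_2\to H^1$ acting on Fourier coefficients by $(1+\sum_j|k_j|^2)^{\pm 1/2}$, whereas you pass through the common diagonal operator $D_\omega:\ell_1\to\ell_2$ --- these are equivalent), then feed \eqref{eq-f-omega} and Proposition~\ref{thm-dk} into Theorem~\ref{thm-general} with smoothness parameter $s-1$ and $\beta=0$. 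Your additional verification of~\eqref{ws-07} and the remark that the real work sits in Proposition~\ref{thm-dk} are accurate and welcome.
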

\begin{proof}Let $\omega$ be given in \eqref{eq-omega}. From \eqref{eq-f-omega} and Theorem \ref{thm-dk} we obtain
\begin{equation}\label{eq-f-omega2}
\lim _{n \to +\infty} \frac{a_n\big(id: F_\omega(\T^d)\to L_2(\T^d)\big)}{n^{-s+1}}=(2d)^{s-1}(2S+1)^{(s-1)(d-1)}.
\end{equation}
Next we will show that
\begin{equation}\label{eq-Amix-Aomega}
s_n\big(id: \mathcal{A}_{\mix}^{s,2}(\T^d) \to H^1(\T^d)\big) = s_n\big(id: \mathcal{A}_\omega(\T^d)\to L_2(\T^d)\big),
\end{equation}
by using standard lifting arguments. We consider the diagram
	\begin{equation*}
	\begin{CD}
		\mathcal{A}_{\mix}^{s,2}(\T^d)  @ > id >> H^1(\T^d) \\
		@VV A V @AA B A\\
		\mathcal{A}_\omega(\T^d) @ > id >> L_2(\T^d) \, 
	\end{CD}
\end{equation*}
where the linear operators $A$ and $B$ 	are defined by
\begin{align*} 
\widehat{Af}(\bk) & : =   \bigg(1+\sum_{j=1}^d |k_j|^2\bigg)^{1/2}\hat{f}(\bk),  \qquad \bk \in \Z^d, \quad f\in 	\mathcal{A}_{\mix}^{s,2}(\T^d)\, ,  
\\ 
\widehat{Bg}(\bk) & :=   \bigg(1+\sum_{j=1}^d |k_j|^2\bigg)^{-1/2}\hat{g}(\bk),\qquad \bk \in \Z^d, \quad g\in L_2(\T^d)\, .
\end{align*}
It is obvious that $\|A\|=\|B\|=1$. Now by the ideal property of $s$-numbers, see \eqref{ws-04}, we obtain
$$
s_n\big(id: \mathcal{A}_{\mix}^{s,2}(\T^d) \to H^1(\T^d)\big) \leq  s_n\big(id: \mathcal{A}_\omega(\T^d)\to L_2(\T^d)\big).
$$
The reverse inequality  follows from the modified diagram
	\begin{equation*}
	\begin{CD}
		\mathcal{A}_{\mix}^{s,2}(\T^d)  @ > id >> H^1(\T^d)\\
		@AA {A}^{-1} A @VV {B}^{-1} V\\
	\mathcal{A}_\omega(\T^d) @ > id >> L_2(\T^d) \, .
	\end{CD}
\end{equation*}
Now the claims follow from Theorem \ref{thm-general}, \eqref{eq-f-omega2}, and \eqref{eq-Amix-Aomega}. 
\end{proof}


\noindent
{\bf Acknowledgments.} The  authors  wishes  to  thank    the  referees  for  their  careful  reading  of  the  paper,  their suggestions and clever remarks to improve the paper, in particular the proof of Theorems \ref{thm-general} and \ref{thm-dk}. The research of Van Dung Nguyen is funded by University of Transport and Communications (UTC) under grant number T2022-CB-008. The research of Van Kien Nguyen is funded by Vietnam 
National Foundation for Science and Technology Development (NAFOSTED) 
under grant number 102.01-2020.03. A part of this paper was done when Van Kien Nguyen was working at the Vietnam Institute for Advanced Study in Mathematics (VIASM). He would like to thank the VIASM for providing a fruitful research environment and working condition.


\end{document}